\newtheorem{theorem}{Theorem}[section]
\newtheorem{lemma}[theorem]{Lemma}
\newtheorem{corollary}[theorem]{Corollary}
\newtheorem{proposition}[theorem]{Proposition}
\newtheorem{algorithm}[theorem]{Algorithm}
\DeclareMathOperator*{\argmax}{argmax}
\newcommand{\BH}{\mathbb{H}}
\newcommand{\BS}{\mathbb{S}}
\newcommand{\BN}{\mathbb{N}}
\newcommand{\R}{\mathbb{R}}
\newcommand{\X}{\mathcal{X}}
\newcommand{\Z}{\mathcal{Z}}
\newcommand{\TT}{\mathcal{T}}
\newcommand{\U}{\mathcal{U}}
\newcommand{\bx}{\boldsymbol{x}}
\newcommand{\by}{\boldsymbol{y}}
\newcommand{\bz}{\boldsymbol{z}}
\newcommand{\bu}{\boldsymbol{u}}
\newcommand{\bv}{\boldsymbol{v}}
\newcommand{\bw}{\boldsymbol{w}}
\newcommand{\T}{\textnormal{T}}
\newcommand{\F}{\textnormal{F}}
\newcommand{\vct}{\textnormal{vec}\,}
\newcommand{\mat}{\textnormal{Mat}}
\begin{document}

\title{Complexity and computation for the spectral norm and nuclear norm of order three tensors with one fixed dimension}

\author{
Haodong HU
\thanks{Department of Computer Science, School of Information Management and Engineering, Shanghai University of Finance and Economics, Shanghai 200433, China. Email: hu.haodong@shufe.edu.cn}
    \and
Bo JIANG
\thanks{Research Institute for Interdisciplinary Sciences, School of Information Management and Engineering, Shanghai  University of Finance and Economics, Shanghai 200433, China. Email: isyebojiang@gmail.com}
    \and
Zhening LI
\thanks{School of Mathematics and Physics, University of Portsmouth, Portsmouth PO1 3HF, United Kingdom. Email: zheningli@gmail.com}
}

\date{\today}

\maketitle

\begin{abstract}

The recent decade has witnessed a surge of research in modelling and computing from two-way data (matrices) to multiway data (tensors). However, there is a drastic phase transition for most tensor optimization problems when the order of a tensor increases from two (a matrix) to three: Most tensor problems are NP-hard while that for matrices are easy. It triggers a question on where exactly the transition occurs. The paper aims to study this kind of question for the spectral norm and the nuclear norm. Although computing the spectral norm for a general $\ell\times m\times n$ tensor is NP-hard, we show that it can be computed in polynomial time if $\ell$ is fixed. This is the same for the nuclear norm. While these polynomial-time methods are not implementable in practice, we propose fully polynomial-time approximation schemes (FPTAS) for the spectral norm based on spherical grids and for the nuclear norm with further help of duality theory and semidefinite optimization. Numerical experiments on simulated data show that our FPTAS can compute these tensor norms for small $\ell \le 6$ but large $m, n\ge50$. To the best of our knowledge, this is the first method that can compute the nuclear norm of general asymmetric tensors. Both our polynomial-time algorithms and FPTAS can be extended to higher-order tensors as well.

\vspace{0.25cm}

\noindent {\bf Keywords:} optimization on spheres, tensor spectral norm, tensor nuclear norm, polynomial-time complexity, FPTAS

\vspace{0.25cm}

\noindent {\bf Mathematics Subject Classification:}
    15A69, 
    68Q25, 
    15A60, 
    90C59  
\end{abstract}

\section{Introduction}\label{sec:introduction}

A tensor, represented by a multiway array, is the higher-order generalization of a matrix. With the surge of data analytics, the research on tensor modelling and tensor computation has been growing massively in the recent decade. However, there is a drastic phase transition for most mathematical optimization problems on tensors when the order of a tensor increases from two (a matrix) to three, evidenced in a nice summary of Hillar and Lim~\cite{HL13}. As a crucial fact, most tensor problems are NP-hard, while that for matrices are easy, such as rank, decomposition, eigenvalue, singular value, spectral norm, nuclear norm, to name a few. This triggers a question on where exactly the transition occurs for specific problems rather than the general cause of the orders. The paper aims to study this kind of question for the spectral norm and the nuclear norm.

Given an order three tensor $\TT=(t_{ijk})\in\R^{\ell\times m\times n}$ assuming without loss of generality that $\ell\le m\le n$, its spectral norm is defined as
\begin{equation}\label{eq:snorm}
  \|\TT\|_{\sigma} := \max \left\{\left\langle \TT, \bx\otimes\by\otimes\bz \right\rangle: \|\bx\|_2=\|\by\|_2=\|\bz\|_2=1,\,\bx\in\R^\ell,\,\by\in\R^m,\,\bz\in\R^n\right\},
\end{equation}
where $\langle,\rangle$ stands for the Frobenius inner product and $\otimes$ stands for the vector outer product, meaning that $\bx\otimes \by \otimes \bz$ is a rank-one tensor. When $\ell=1$, $\TT$ is reduced to a matrix whose spectral norm is its largest singular value and easily obtainable, say by singular value decompositions (SVD). However, computing the tensor spectral norm for general $(\ell,m,n)$ is NP-hard~\cite{HLZ10}. In this paper, we will show that for fixed $\ell$, the spectral norm of $\TT\in\R^{\ell\times m\times n}$ can be computed in polynomial time.

The problem~\eqref{eq:snorm} was originally proposed by Lim~\cite{L05} as the largest singular value of a tensor. Since $\left\langle \TT, \bx\otimes\by\otimes\bz \right\rangle = \sum_{i=1}^{\ell}\sum_{j=1}^{m}\sum_{k=1}^{n} t_{ijk} x_iy_jz_k$ is a trilinear form of $(\bx,\by,\bz)$, the tensor spectral norm is commonly known as the maximization of a multilinear form over Cartesian products of unit spheres in mathematical optimization. It is closely related to sphere constrained homogeneous polynomial optimization which is also NP-hard when the degree of the polynomial is more than two~\cite{N03}. In fact,~\eqref{eq:snorm} has been routinely used as a relaxation~\cite{KN08,HLZ10,S11,ZQY12,HJLZ14} for sphere constrained homogeneous polynomial optimization to study approximate solutions of the latter. He et al.~\cite{HLZ10} proposed the first polynomial-time approximation algorithm for~\eqref{eq:snorm} with approximation bound $\frac{1}{\sqrt{\ell}}$, which was later improved to $\Omega\left(\sqrt{\frac{\ln \ell}{\ell}}\right)$ by So~\cite{S11}. This remains the best approximation bound so far albeit more than ten years has passed. In fact, there is still a large gap to the inapproximability since only fully polynomial-time approximation schemes (FPTAS) have been ruled out for a fixed degree polynomial optimization~\cite{K08}. The sum-of-squares hierarchy based method~\cite{L01,KL22} is the only known tool to find exact solutions of~\eqref{eq:snorm} albeit its running time can be exponential.

In the tensor community,~\eqref{eq:snorm} is often written as an equivalent problem, namely the best rank-one approximation of a tensor
\begin{equation}\label{eq:rankone}
  \min\left\{\| \TT-\lambda\,\bx\otimes \by \otimes \bz \|_{\F}: \lambda\in\R,\, \|\bx\|_2=\|\by\|_2=\|\bz\|_2=1,\,\bx\in\R^\ell,\,\by\in\R^m,\,\bz\in\R^n\right\},
\end{equation}
where $\|\bullet\|_{\F}$ stands for the Frobenius norm. It is evidently one of the essential problems in tensor computation~\cite{KB09}. This equivalence originates from the important geometrical fact that the spectral norm of a tensor measures its approximability by rank-one tensors, i.e., $\lambda\,\bx\otimes \by \otimes \bz$ is a best rank-one approximation of the tensor $\TT$ in~\eqref{eq:rankone} if and only if ${\|\TT-\lambda\,\bx\otimes \by \otimes \bz\|_\F}^2={\|\TT\|_\F}^2-{\|\TT\|_\sigma}^2$. The original reference for this observation is hard to trace back; see, e.g.,~\cite{KB09}. Various iterative methods have been developed, perhaps under different names, such as higher-order power method~\cite{RK00}, higher-order SVD~\cite{DDV00}, alternating least squares~\cite{KB09}, maximum block improvement~\cite{CHLZ12}, alternating SVD~\cite{DDV00B}. These methods typically converge to local optimal solutions with a guaranteed rate of convergence and work well in practice. However, none of them guarantees the global optimality.

The nuclear norm of a tensor $\TT\in\R^{\ell\times m\times n}$ is defined as
\begin{equation} \label{eq:nnorm}
    \|\TT\|_*=\min\left\{\sum_{i=1}^r|\lambda_i|: \TT=\sum_{i=1}^r\lambda_i\,\bx_i\otimes\by_i\otimes\bz_i, \, \lambda_i\in\R, \, \|\bx_i\|_2=\|\by_i\|_2=\|\bz_i\|_2=1, \, r\in\BN \right\},
  \end{equation}
which is also reduced to the nuclear norm of a matrix when $\ell=1$. Similar to matrices, the tensor nuclear norm and spectral norm are dual to each other (see e.g.,~\cite{LC14}), i.e.,
$$
  \|\TT\|_*=\max_{\|\Z\|_\sigma \le 1}\langle\TT,\Z\rangle \mbox{ and } \|\TT\|_\sigma=\max_{\|\Z\|_* \le 1}\langle\TT,\Z\rangle.
$$
As the role of matrix nuclear norm used in many rank minimization problems, the tensor nuclear norm is the convex envelope of the tensor rank and is widely used in tensor completions~\cite{GRY11,YZ16}. Computing the tensor nuclear norm~\eqref{eq:nnorm} is NP-hard for general $(\ell, m, n)$~\cite{FL18} while the matrix nuclear norm can be easily obtained via SVD as the sum of all singular values. The phase transition occurs drastically again from order two to order three. In fact, computing the tensor nuclear norm is even harder than the tensor spectral norm no matter from the definition~\eqref{eq:nnorm} or the dual form---the feasibility problem is not easy at all. This unpleasant fact has resulted alternative concepts of the tensor nuclear norm in practical modelling and applications. Perhaps the only known method to compute the tensor nuclear norm is based on the sums-of-squares relaxation by Nie~\cite{N17} but it only works for symmetric tensors and efficient for low dimensions. In terms of polynomial-time approximation methods, the best approximation bound is $\frac{1}{\sqrt{\ell}}$, either via matrix flattenings of the tensor~\cite{H15} or via partitioning the tensor into matrix slices~\cite{L16}. We will also show in this paper that for fixed $\ell$, the nuclear norm of $\TT\in\R^{\ell\times m\times n}$ can be computed in polynomial time.

Our polynomial-time algorithm to compute the tensor spectral norm~\eqref{eq:snorm} for fixed $\ell$ relies on an important result on the feasibility testing for quadratic forms due to Barvinok~\cite{B93}. For the tensor nuclear norm~\eqref{eq:nnorm}, it calls the complexity equivalence of dual norm due to Friedland and Lim~\cite{FL16}. Although the methods are not implementable in practice because of the inherited results, the novel connections to system of quadratic equations and quadratic optimization broaden the way to better understand and further tackle these difficult tensor problems. On the other front, we are indeed able to propose implementable FPTAS for both~\eqref{eq:snorm} and~\eqref{eq:nnorm} with fixed $\ell$. Our methods are based on spherical grids for the spectral norm and with further help of duality theory and semidefinite optimization for the nuclear norm. Numerical implementations with guaranteed controllable errors are performed for some small $\ell$'s. This is perhaps the first treatise to exactly compute the nuclear norm of general asymmetric tensors, to the best of our knowledge. It is worth mentioning that tensors of order three with one small dimension do have many applications; see e.g.,~\cite{A13, QCC18}. One obvious example is RGB color images that are seen as tensors of order three with one dimension being three to exploit the spatial and interchannel correlations~\cite{QDB16}.

This paper is organized as follows. We start with some notations and prove the polynomial-time complexity for the spectral norm and nuclear norm of a general tensor $\TT\in\R^{\ell \times m\times n}$ with fixed $\ell$ in Section~\ref{sec:polynomial}. FPTAS for the same problems are discussed in Section~\ref{sec:fptas} followed by numerical experiments in Section~\ref{sec:numerical}. Finally, some concluding remarks are given in Section~\ref{sec:conclusion}.

\section{Polynomial-time complexity}\label{sec:polynomial}

We uniformly denote scalars, vectors, matrices, and tensors of order three or higher by using lower case letters (e.g., $x\in\R$), boldface lower case letters (e.g., $\bx=(x_i)\in\R^{n}$), capital letters (e.g., $X=(x_{ij})\in\R^{m\times n}$) and calligraphic letters ($\X=(x_{ijk})\in\R^{\ell \times m\times n}$), respectively. The convention norm (i.e., a norm without a subscript) is the Frobenius norm or the Euclidean norm, no matter for tensors, matrices or vectors. $\BN$ denotes the set of positive integers.

A rank-one tensor, also called a simple tensor, is a tensor that can be written as outer products of vectors; for instance, $\X=\bx \otimes \by\otimes\bz$. It is easy to verify that $\|\X\|=\|\bx\|\cdot\|\by\|\cdot\|\bz\|$. From the definition~\eqref{eq:snorm},
$\|\TT\|_\sigma$ is the maximal value of the Frobenius inner product between $\TT$ and a rank-one tensor whose Frobenius norm is one.


Given a tensor of order three $\TT\in\R^{\ell\times m\times n}$, we assume without loss of generality that $\ell\le m\le n$ and let $\ell$ be fixed. We may represent $\TT$ by matrices slices $(T_1|T_2|\dots|T_\ell)$ where $T_i\in\R^{m\times n}$ is the matrix obtained by fixing the first index of entries to be $i$. It follows that
$$
\left\langle \TT,\bx\otimes\by\otimes\bz\right\rangle
= \sum_{i=1}^{\ell}\sum_{j=1}^{m}\sum_{k=1}^{n} t_{ijk} x_iy_jz_k
= \sum_{i=1}^{\ell}x_i \sum_{j=1}^{m}\sum_{k=1}^{n} (T_i)_{jk} y_jz_k
=\sum_{i=1}^\ell x_i \by^{\T}T_i\bz.
$$
Therefore, by Cauchy-Schwarz inequality
\begin{equation}\label{eq:yzform}
 {\|\TT\|_\sigma}^2=\max_{\|\bx\|=\|\by\|=\|\bz\|=1}\left(\sum_{i=1}^\ell x_i \by^{\T}T_i\bz\right)^2 = \max_{\|\by\|=\|\bz\|=1} \sum_{i=1}^\ell\left(\by^{\T}T_i\bz\right)^2.
\end{equation}
We now provide a key result linking the tensor spectral norm to the feasibility of quadratic systems.

\begin{lemma}\label{thm:qp}
 If $\TT=(T_1|T_2|\dots|T_\ell)\in\R^{\ell\times m\times n}$ and $\alpha\ge\min_{\|\by\|=\|\bz\|=1} \sum_{i=1}^\ell\left(\by^{\T}T_i\bz\right)^2$, then the following system of quadratic equations of $(t,\by,\bz,\bu)$
\begin{equation}
\left\{\begin{array}{l}
     \by^{\T}T_i\bz=tu_i\quad i=1,2,\dots,\ell\\
     \|\by\|^2+\|\bz\|^2=2t^2 \\
     \|\bu\|^2=\alpha t^2\\
     t^2+\|\by\|^2+\|\bz\|^2+\|\bu\|^2=3+\alpha
    \end{array} \right.\label{eq:qp}
\end{equation}
has a solution if and only if $\alpha\le {\|\TT\|_\sigma}^2$.
\end{lemma}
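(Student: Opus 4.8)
\medskip
\noindent\textbf{Proof plan.}
The plan is to collapse the quadratic system~\eqref{eq:qp} to a single scalar condition on $\alpha$, and then to combine the characterization~\eqref{eq:yzform} of ${\|\TT\|_\sigma}^2$ with an intermediate value (connectedness) argument.

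First I would eliminate $t$ and $\bu$. Since $\alpha\ge\min_{\|\by\|=\|\bz\|=1}\sum_{i=1}^\ell(\by^{\T}T_i\bz)^2\ge 0$, we have $3+\alpha>0$, so substituting the second and third equations of~\eqref{eq:qp} into the fourth yields $(3+\alpha)t^2=3+\alpha$, i.e. $t^2=1$. Because $(t,\by,\bz,\bu)\mapsto(-t,\by,\bz,-\bu)$ maps solutions to solutions, we may assume $t=1$, whereupon~\eqref{eq:qp} becomes $u_i=\by^{\T}T_i\bz$ for all $i$, $\|\by\|^2+\|\bz\|^2=2$, and $\|\bu\|^2=\alpha$. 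Eliminating $\bu$, the system is solvable if and only if there exist $\by\in\R^m$ and $\bz\in\R^n$ with $\|\by\|^2+\|\bz\|^2=2$ and $\sum_{i=1}^\ell(\by^{\T}T_i\bz)^2=\alpha$.

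Next I would analyze $h(\by,\bz):=\sum_{i=1}^\ell(\by^{\T}T_i\bz)^2$ on the sphere $S:=\{(\by,\bz)\in\R^m\times\R^n:\|\by\|^2+\|\bz\|^2=2\}$. As $S$ is a Euclidean sphere of dimension $m+n-1\ge1$ it is connected, and $h$ is a polynomial, so $h(S)$ is a compact interval $[h_{\min},h_{\max}]$. Writing $\|\by\|=r$, $\|\bz\|=s$ with $r^2+s^2=2$ and, when $r,s>0$, $\by=r\hat\by$, $\bz=s\hat\bz$ with $\hat\by,\hat\bz$ unit, homogeneity gives $h(\by,\bz)=r^2s^2\sum_i(\hat\by^{\T}T_i\hat\bz)^2$; since $r^2s^2\le\left(\frac{r^2+s^2}{2}\right)^2=1$ with equality at $r=s=1$ (and $h=0$ if $r=0$ or $s=0$), comparison with~\eqref{eq:yzform} gives $h_{\max}={\|\TT\|_\sigma}^2$. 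Moreover every unit pair $(\hat\by,\hat\bz)$ lies in $S$, so $h_{\min}\le\min_{\|\by\|=\|\bz\|=1}\sum_i(\by^{\T}T_i\bz)^2$.

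To finish: if~\eqref{eq:qp} has a solution then, by the reduction, $\alpha=h(\by,\bz)\le h_{\max}={\|\TT\|_\sigma}^2$; conversely, the hypothesis $\min_{\|\by\|=\|\bz\|=1}\sum_i(\by^{\T}T_i\bz)^2\le\alpha\le{\|\TT\|_\sigma}^2=h_{\max}$ together with $h_{\min}\le\min_{\|\by\|=\|\bz\|=1}\sum_i(\by^{\T}T_i\bz)^2$ places $\alpha$ in $[h_{\min},h_{\max}]=h(S)$, so some $(\by,\bz)\in S$ achieves $h(\by,\bz)=\alpha$, and taking $t=1$, $u_i=\by^{\T}T_i\bz$ recovers a solution of~\eqref{eq:qp}. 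The routine parts are the algebraic collapse to $t^2=1$ and the sign normalization; the one point that requires care is the mismatch between the constraint $\|\by\|^2+\|\bz\|^2=2$ in~\eqref{eq:qp} and the unit-sphere normalization defining ${\|\TT\|_\sigma}$---one must verify that relaxing to $\|\by\|^2+\|\bz\|^2=2$ leaves the maximum unchanged (the AM--GM bound $r^2s^2\le1$) and must run the intermediate value argument on the genuinely connected sphere $S$ rather than on a product of two spheres.
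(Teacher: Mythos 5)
Your proof is correct, and it uses the same basic ingredients as the paper's argument (the identity~\eqref{eq:yzform}, the forced conclusion $t^2=1$ from the last three equations, an intermediate-value/connectedness argument, and the AM--GM bound $yz\le 1$ when $y^2+z^2=2$), but it organizes them differently. The paper proves the two directions separately: sufficiency by applying the intermediate value theorem to $\sum_i(\by^{\T}T_i\bz)^2$ on the product of unit spheres, and necessity by contradiction, showing for a putative solution with $\alpha>{\|\TT\|_\sigma}^2$ that $\by,\bz\neq\bf 0$, normalizing, and deriving $yz>1$ against $yz\le 1$. You instead collapse the system once and for all --- using the sign symmetry $(t,\by,\bz,\bu)\mapsto(-t,\by,\bz,-\bu)$ to fix $t=1$ --- into the single scalar condition that $\alpha$ lie in the image of $h(\by,\bz)=\sum_i(\by^{\T}T_i\bz)^2$ on the connected sphere $S=\{\|\by\|^2+\|\bz\|^2=2\}$, and then identify that image as $[h_{\min},{\|\TT\|_\sigma}^2]$ via homogeneity and AM--GM. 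What your route buys: both directions drop out of one equivalence, the degenerate cases $\by={\bf 0}$ or $\bz={\bf 0}$ are handled transparently (there $h=0$), the necessity direction is direct rather than by contradiction, and running the connectedness argument on the single sphere $S$ avoids even the cosmetic worry about connectedness of a product of unit spheres in degenerate dimensions. What the paper's route buys: it never needs the sign normalization (only $t^2=1$ is used), and its contradiction argument stays entirely inside the system~\eqref{eq:qp}, which makes the role of each equation explicit. Either way, your argument is complete and would serve as a valid proof of the lemma.
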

\begin{proof}
If $\alpha\le {\|\TT\|_\sigma}^2$, let us consider the continuous function $\sum_{i=1}^\ell\left(\by^{\T}T_i\bz\right)^2$ over $\|\by\|=\|\bz\|=1$. The image of this function must be connected, which is $\left[\min_{\|\by\|=\|\bz\|=1}\sum_{i=1}^\ell\left(\by^{\T}T_i\bz\right)^2,{\|\TT\|_\sigma}^2\right]$ according to~\eqref{eq:yzform}. Since $\alpha$ is lower bounded by this left end point, there must exist $\|\by\|=\|\bz\|=1$ such that $\alpha=\sum_{i=1}^\ell\left(\by^{\T}T_i\bz\right)^2$. Let $\bu\in\R^{\ell}$ with $u_i=\by^{\T}T_i\bz$ for $i=1,2,\dots,\ell$ and let $t=1$. All the equations in~\eqref{eq:qp} is easy to verify since
$$
\|\bu\|^2=\sum_{i=1}^\ell {u_i}^2=\sum_{i=1}^\ell\left(\by^{\T}T_i\bz\right)^2=\alpha.
$$
We find a solution of the system~\eqref{eq:qp}.

If $\alpha>{\|\TT\|_\sigma}^2$, suppose on the contrary that the system~\eqref{eq:qp} does have a solution, denoted by $(t,\by,\bz,\bu)$ by abusing the notations. Substituting the second and third equations of~\eqref{eq:qp} into the fourth, we have that $3t^2+\alpha t^2=3+\alpha$, implying that $t^2=1$.

Next we show that $\by\neq \bf 0$. If this is not true, then by the first equation of~\eqref{eq:qp} we must have $u_i=0$ for $i=1,2,\dots,\ell$ since $t^2=1$. However, this leads to $\|\bu\|=0$ and further $t=0$ by the third equation, contradicting to $t^2=1$. For the same reason, we also have $\bz\neq \bf 0$.

Now let us denote $y=\|\by\|>0$ and $z=\|\bz\|>0$. From the first equation we have
$$
\left(\frac{\by}{y}\right)^{\T}T_i\frac{\bz}{z}=\frac{tu_i}{yz}\quad i=1,2,\dots,\ell.
$$
Since $\big\|\frac{\by}{y}\big\|=\left\|\frac{\bz}{z}\right\|=1$ and~\eqref{eq:yzform},
$$
\alpha > {\|\TT\|_\sigma}^2 = \max_{\|\bv\|=\|\bw\|=1} \sum_{i=1}^\ell\left(\bv^{\T}T_i\bw\right)^2
 \ge \sum_{i=1}^\ell\left(\left(\frac{\by}{y}\right)^{\T}T_i\frac{\bz}{z}\right)^2
 = \sum_{i=1}^\ell\left(\frac{tu_i}{yz}\right)^2 = \frac{{t^2}\|\bu\|^2}{y^2z^2}=\frac{\alpha}{y^2z^2}.
$$
where the last equality is due to $t^2=1$ and the third equation of~\eqref{eq:qp}. The above inequality implies that $yz>1$.

On the other hand, by the second equation, one actually has
$$2=2t^2=\|\by\|^2+\|\bz\|^2=y^2+z^2\ge 2yz,$$
i.e., $yz\le1$. We are lead to a contradiction.
\end{proof}

The quadratic system~\eqref{eq:qp} has $1+\ell+m+n$ variables, $\ell+2$ homogenous quadratic equations and one sphere constraint. A well known result due to Barvinok~\cite[Theorem 1.2]{B93} states that for a fixed number of quadratic forms, whether the system has a nonzero solution (which can be done in the presence of a sphere constraint) can be decided using a number of arithmetic operations which is polynomial in the number of variables of the system. However, the computational complexity can be high. Barvinok calculated that the required number of operations is asymptotically $p^{O(q^2)}$ for $p$ variables and $q$ quadratic forms. Therefore, deciding whether~\eqref{eq:qp} has a solution can be performed by a number of operations in polynomial of $m$ and $n$ for fixed $\ell$, albeit impractical.

Let us return to the spectral norm of the tensor $\TT\in\R^{\ell\times m\times n}$. By Lemma~\ref{thm:qp}, ${\|\TT\|_\sigma}^2$ is the threshold for~\eqref{eq:qp} to have a solution or not. If we are able to obtain a lower bound $\alpha_1\in \left[\min_{\|\by\|=\|\bz\|=1} \sum_{i=1}^\ell\left(\by^{\T}T_i\bz\right)^2, {\|\TT\|_\sigma}^2\right]$ and an upper bound $\alpha_2\ge {\|\TT\|_\sigma}^2$ both in polynomial time, then the search of ${\|\TT\|_\sigma}^2$ can be done using the bisection method over the interval $[\alpha_1,\alpha_2]$ with the help of the feasibility testing of~\eqref{eq:qp} for $\alpha\in[\alpha_1,\alpha_2]$.

In fact, it is trial to obtain a lower bound $\sum_{i=1}^\ell\left(\by^{\T}T_i\bz\right)^2$ using any vectors $\|\by\|=\|\bz\|=1$. To get a tighter one in practice, we may choose the left and right singular vectors corresponding to the largest singular value of $T_i$ for every $i=1,2,\dots,\ell$. In particular
\begin{equation}\label{eq:lower}
{\|\TT\|_\sigma}^2\ge \alpha_1:=\max_{1\le i\le\ell}\sum_{j=1}^\ell\left(\by_i^{\T}T_j\bz_i\right)^2 \mbox{ where } (\by_i,\bz_i)\in\argmax_{\|\by\|=\|\bz\|=1}\by^{\T}T_i\bz
\end{equation}
and this $\alpha_1$ at least beats $\max_{1\le i\le \ell}{\|T_i\|_\sigma}^2$.

For the upper bound, a trivial candidate is $\|\TT\|^2$ but may be loose. It is known that the spectral norm of a tensor is no more than the spectral norm of its flattened matrix; see, e.g.~\cite{HLZ10}. For instance, if $\TT$ is flattened to a matrix by stacking $T_i$'s from top to bottom, then
\begin{equation}\label{eq:upper}
{\|\TT\|_\sigma}^2\le \alpha_2:={\|\mat(\TT)\|_\sigma}^2 \mbox{ where } \mat(\TT):=\left(T_1^{\T},T_2^{\T},\dots,T_\ell^{\T}\right)^{\T}\in\R^{\ell m\times n}.
\end{equation}
On the other hand, as $\{T_1,T_2,\dots,T_\ell\}$ is a partition of the tensor $\TT$, one has ${\|\TT\|_\sigma}^2\le \sum_{i=1}^\ell {\|T_i\|_\sigma}^2$; see~\cite{L16} for details. Both ${\|\mat(\TT)\|_\sigma}^2$ and $\sum_{i=1}^\ell {\|T_i\|_\sigma}^2$ can be proven to be at most $\ell\, {\|\mat(\TT)\|_\sigma}^2$ and are indeed easy to compute.

Combining all the discussions above, we can now conclude the following.
\begin{theorem} \label{thm:snorm}
  The spectral norm of an order three tensor $\TT\in\R^{\ell\times m\times n}$ with fixed $\ell$ can be computed in polynomial time.
\end{theorem}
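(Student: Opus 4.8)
The plan is to assemble Theorem~\ref{thm:snorm} from the ingredients already laid out: Lemma~\ref{thm:qp}, Barvinok's feasibility result, and the explicit polynomial-time bounds $\alpha_1$ and $\alpha_2$ in~\eqref{eq:lower} and~\eqref{eq:upper}. First I would record that $\alpha_1$ and $\alpha_2$ are both computable in polynomial time in $m,n$ for fixed $\ell$: each requires only $\ell$ singular value decompositions of $m\times n$ matrices (for $\alpha_1$ and for $\sum_i\|T_i\|_\sigma^2$) or one SVD of an $\ell m\times n$ matrix (for $\|\mat(\TT)\|_\sigma$), all standard polynomial-time linear algebra. By~\eqref{eq:lower} and~\eqref{eq:upper} we have $\alpha_1\le\|\TT\|_\sigma^2\le\alpha_2$, and crucially $\alpha_1\ge\min_{\|\by\|=\|\bz\|=1}\sum_{i=1}^\ell(\by^{\T}T_i\bz)^2$ since $\alpha_1$ is itself a value attained by the objective at a feasible point. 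This is precisely the hypothesis needed to invoke Lemma~\ref{thm:qp}: for every $\alpha\in[\alpha_1,\alpha_2]$ we have $\alpha\ge\min_{\|\by\|=\|\bz\|=1}\sum_{i=1}^\ell(\by^{\T}T_i\bz)^2$, so the quadratic system~\eqref{eq:qp} has a solution if and only if $\alpha\le\|\TT\|_\sigma^2$.

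Next I would set up the bisection. The system~\eqref{eq:qp} has $1+\ell+m+n$ variables and $\ell+2$ quadratic forms together with a sphere constraint, so by Barvinok~\cite[Theorem 1.2]{B93} its feasibility (equivalently, the existence of a nonzero solution in the presence of the sphere normalization in the fourth equation) can be decided in $(m+n)^{O(\ell^2)}$ arithmetic operations, which is polynomial in $m,n$ for fixed $\ell$. Running bisection on $[\alpha_1,\alpha_2]$, at each step testing feasibility of~\eqref{eq:qp} at the midpoint, localizes $\|\TT\|_\sigma^2$ to an interval of length $(\alpha_2-\alpha_1)2^{-K}$ after $K$ iterations. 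Since each feasibility test is polynomial and only $\mathrm{poly}(m,n)$-many iterations are needed to reach any prescribed accuracy, the overall cost is polynomial in $m,n$.

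The one genuine subtlety — and the step I expect to need the most care — is turning ``localize to high accuracy'' into ``compute exactly in polynomial time.'' The clean way is to observe that $\|\TT\|_\sigma^2$ is an algebraic number of degree bounded by a function of $\ell,m,n$ (indeed $\|\TT\|_\sigma^2$ is a root of a resultant/characteristic system with bit-size polynomially bounded in the input), so it is separated from any distinct candidate value by an inverse-exponential gap; bisecting until the interval is shorter than this separation gap, then rounding to the nearest such algebraic number, yields the exact value. Alternatively, one can phrase the whole argument in the standard model where ``compute'' means ``compute to within $\varepsilon$ in time polynomial in the input size and $\log(1/\varepsilon)$,'' which is exactly what bisection delivers and which matches how the FPTAS results of Section~\ref{sec:fptas} are later stated. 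Either way the conclusion follows, and I would state it in whichever of these two senses the paper has fixed; the bisection-plus-feasibility-oracle skeleton is identical.
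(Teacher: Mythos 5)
Your proposal is correct and follows essentially the same route as the paper: polynomial-time computable bounds $\alpha_1$ and $\alpha_2$ from~\eqref{eq:lower} and~\eqref{eq:upper}, bisection over $[\alpha_1,\alpha_2]$, and Barvinok's feasibility test applied to the system~\eqref{eq:qp} via Lemma~\ref{thm:qp}, with the complexity $(m+n)^{O(\ell^2)}$ per test matching the paper's count of $\ell+2$ quadratic forms in $1+\ell+m+n$ variables. Your closing discussion of what ``compute'' means also lands where the paper does, since the paper explicitly states the theorem in the sense of approximating $\|\TT\|_\sigma$ within $\epsilon$ in time polynomial in $m$, $n$, $\ln\frac{1}{\epsilon}$ and the bit size of the data.
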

In a formal language, the above result should read as follows: For any $\epsilon>0$, there is an algorithm with complexity in a polynomial of $m$, $n$, $\ln\frac{1}{\epsilon}$ and the number of bits in the data, such that $\|\TT\|_\sigma$ can be approximated within an error $\epsilon$. This is because of the bisection method and Barvinok's method for the feasibility testing of quadratic forms~\cite{B93}.

If one is interested in the optimal solution $(\bx,\by,\bz)$ of~\eqref{eq:snorm} other than the optimal value $\|\TT\|_\sigma$, this is not directly available via the underlying method. In fact, it is not easier than finding $\|\TT\|_\sigma$ itself. One doable approach is to apply the final feasibility system~\eqref{eq:qp} with $\alpha={\|\TT\|_\sigma}^2$ to construct another similar system of quadratic equations to find $u_1$ by the bisection method. We then recursively construct new systems to find $u_2$, $u_3$, and so on. Once the vector $\bu$ is found, it's not difficult to see that $\bx=\frac{\bu}{\|\bu\|}$, and then $\by$ and $\bz$ can obtained from any optimal solution of $\max_{\|\by\|=\|\bz\|=1}\by^{\T}(\sum_{i=1}^\ell x_iT_i)\bz$. We leave the details to interested readers. 

Another approach to find an optimal solution of~\eqref{eq:snorm} is to add an objective $\max \|\bu\|^2$ to the final feasibility system~\eqref{eq:qp} with $\alpha={\|\TT\|_\sigma}^2$. This can be equivalently rewritten as the optimization of a quadratic function subject to a fixed number of quadratic inequalities with at least one strictly convex constraint (from the fourth equation of~\eqref{eq:qp}). According to a result of Bienstock~\cite[Theorem 1.3]{B16}, an $\epsilon$-optimal solution for such problem can be found in polynomial time. This $\epsilon$-optimal solution may only have an $\epsilon$-feasibility but it can be made feasible simply by scaling to $\|\bx\|=\|\by\|=\|\bz\|=1$. 

In fact, it is also possible to prove Theorem~\ref{thm:snorm} by transferring the problem~\eqref{eq:snorm} to the optimization of a quadratic function subject to a fixed number of quadratic inequalities with at least one strictly convex constraint that can be solved by Bienstock's method~\cite{B16}. However, the reduction roughly doubles the number of equations in~\eqref{eq:qp} by transferring to inequalities and the algorithm in~\cite{B16} also calls Barvinok's method for the feasibility testing of quadratic forms. This back-and-forth approach only increases the computational costs. Our construction of Lemma~\ref{thm:qp} remains the same difficult level as the spectral norm problem~\eqref{eq:snorm}. Apart from the homogenization variable $t$, it keeps exactly the same number of variables, i.e., $\ell+m+n$, with only $\ell+2$ homogeneous quadratic equations among which $\ell$ of them are dense.

One important reason for our reduction of the tensor spectral norm to the feasibility of quadratic system lies in the great applicability of the latter to be taken as a benchmark problem. Apart from its connection to the complexity analysis such as polynomial-time solvability of quadratic optimization with a fixed number quadratic constraints~\cite{B16}, it also finds applications in computing Nash equilibria of noncooperative games between two players~\cite{LM04} and even in multilinear algebra~\cite{BCMT10} on the equivalence between an existence condition of a symmetric tensor decomposition and the solution of a quadratic system. In fact, using an argument in~\cite{DCD16}, the nonzero feasibility of $\ell$ quadratic forms in $n$ variables can be reduced to the spectral norm of a $k\times n\times n$ tensor. However, the $k$ can be as large as $n^2$ in general and thus the polynomial-time complexity would have disappeared when $\ell$ is fixed. It remains unknown to us whether there is a reduction to the spectral norm of a smaller tensor of order three with the hope of one fixed dimension when $\ell$ is fixed. At least, our reduction indicates that the spectral norm of an $\ell\times m\times n$ tensor should not be more difficult than the nonzero feasibility of $\ell+2$ quadratic forms.

Let us look into the nuclear norm. From its original definition~\eqref{eq:nnorm}, the optimization is over all the possible rank-one decomposition of $\TT=\sum_{i=1}^r\lambda_i\,\bx_i\otimes\by_i\otimes\bz_i$. Since $\|\lambda_i\,\bx_i\otimes\by_i\otimes\bz_i\|=|\lambda_i|$, $\|\TT\|_*$ is actually the minimum sum of the Frobenius norms of rank-one tensors in any rank-one decomposition. A rank-one decomposition of $\TT$ that attains the nuclear norm is called a nuclear decomposition. As mentioned in the introduction, the tensor nuclear norm and spectral norm are dual to each other, i.e., $\|\TT\|_*=\max_{\|\Z\|_\sigma \le 1}\langle\TT,\Z\rangle$ and $\|\TT\|_\sigma=\max_{\|\Z\|_* \le 1}\langle\TT,\Z\rangle$ whose proof can be found in~\cite{LC14}. According to the complexity of duality by Friedland and Lim~\cite[Section 3]{FL16}, the computational complexity of a norm and that of its dual norm are polynomial-time interreducible. If a norm is polynomial-time computable, then so is its dual norm; if a norm is NP-hard to compute, then so is its dual norm. As a consequence, we have the following corollary.
\begin{corollary} \label{thm:nnorm}
  The nuclear norm of an order three tensor $\TT\in\R^{\ell\times m\times n}$ with fixed $\ell$ can be computed in polynomial time.
\end{corollary}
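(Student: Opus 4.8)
The plan is to deduce Corollary~\ref{thm:nnorm} from Theorem~\ref{thm:snorm} by invoking the polynomial-time interreducibility of a norm and its dual norm, which is exactly what Friedland and Lim establish in~\cite[Section 3]{FL16}. First I would recall the duality pairing $\|\TT\|_* = \max_{\|\Z\|_\sigma \le 1}\langle\TT,\Z\rangle$ on the space $\R^{\ell\times m\times n}$, so that the tensor nuclear norm is literally the dual norm of the tensor spectral norm on this finite-dimensional inner product space. Theorem~\ref{thm:snorm} already tells us that the spectral norm on $\R^{\ell\times m\times n}$ with $\ell$ fixed is polynomial-time computable (in the sense of $\epsilon$-approximation with complexity polynomial in $m$, $n$, $\ln\frac1\epsilon$ and the bit-length of the data). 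I would then appeal to the Friedland--Lim result: the computational complexity of evaluating a norm and that of evaluating its dual norm are polynomial-time interreducible, essentially because evaluating the dual norm reduces to a convex optimization problem (maximizing a linear functional over the unit ball of the primal norm) that can be solved by the ellipsoid method using a polynomial-time separation oracle built from a polynomial-time evaluation routine for the primal norm. Applying this with the primal norm being $\|\cdot\|_\sigma$ on $\R^{\ell\times m\times n}$ immediately yields that $\|\cdot\|_*$ is polynomial-time computable for fixed $\ell$.

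The key steps in order are: (i) identify the ambient space as $\R^{\ell\times m\times n}$ with the Frobenius inner product and observe that its dimension $\ell m n$ is polynomial in $m,n$ when $\ell$ is fixed; (ii) quote Theorem~\ref{thm:snorm} to get a polynomial-time weak evaluation oracle for $\|\cdot\|_\sigma$; (iii) quote the duality $\|\TT\|_* = \max_{\|\Z\|_\sigma\le1}\langle\TT,\Z\rangle$ from~\cite{LC14}; (iv) invoke~\cite[Section 3]{FL16} to conclude that an evaluation oracle for a norm yields an evaluation oracle for the dual norm with only polynomial overhead; (v) combine (ii) and (iv) to conclude polynomial-time computability of $\|\cdot\|_*$. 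One technical point I would be careful about is that all these ``polynomial-time'' statements are really about $\epsilon$-approximation, so I would phrase the conclusion in the same weak form as in the paragraph following Theorem~\ref{thm:snorm}: for any $\epsilon>0$ there is an algorithm with complexity polynomial in $m$, $n$, $\ln\frac1\epsilon$ and the input bit-length that approximates $\|\TT\|_*$ within $\epsilon$. The interreducibility in~\cite{FL16} is stated precisely at this level of approximation, so the composition of the two $\epsilon$-oracles is unproblematic provided one tracks error propagation and the polynomial dependence on $\ln\frac1\epsilon$.

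The main obstacle, and really the only substantive content beyond bookkeeping, is verifying that the Friedland--Lim reduction applies cleanly in our parametrized setting: their framework must be checked to preserve the ``$\ell$ fixed'' structure, i.e.\ the reduction must be polynomial in the dimension of the space (here $\ell m n$) and must not implicitly blow up when one coordinate dimension is small but the others are large. Since their argument is dimension-generic---it only uses that one has a norm on a finite-dimensional normed space together with an evaluation oracle, and produces an oracle for the dual norm via the ellipsoid method---this goes through without modification; the unit ball of $\|\cdot\|_\sigma$ is a full-dimensional centrally symmetric convex body in $\R^{\ell m n}$ (it is sandwiched between Euclidean balls of polynomially-bounded radii, as $\|\TT\|_\sigma \le \|\TT\| \le \sqrt{\ell}\,\|\TT\|_\sigma$), so the required well-boundedness hypotheses of the ellipsoid method are met. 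Hence the proof is essentially a one-line composition: Theorem~\ref{thm:snorm} plus~\cite[Section 3]{FL16} plus the standard spectral--nuclear duality give the result.
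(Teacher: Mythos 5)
Your proposal follows essentially the same route as the paper: quote the duality $\|\TT\|_*=\max_{\|\Z\|_\sigma\le1}\langle\TT,\Z\rangle$ from~\cite{LC14}, invoke the Friedland--Lim polynomial-time interreducibility of a norm and its dual~\cite[Section 3]{FL16}, and apply Theorem~\ref{thm:snorm}; this is exactly the paper's argument, with your ellipsoid-method discussion merely unpacking what~\cite{FL16} provides. One small slip in your side remark: the inequality $\|\TT\|\le\sqrt{\ell}\,\|\TT\|_\sigma$ is false in general (already for $\ell=1$ and a high-rank matrix slice), but the correct bound $\|\TT\|\le\sqrt{\ell m}\,\|\TT\|_\sigma$ still gives the polynomially bounded sandwiching of the spectral-norm unit ball, so the conclusion is unaffected.
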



The polynomial-time computability of the two tensor norms can be extended to higher orders. The definitions of the spectral norm and nuclear norm of an order $d$ tensor is a straightforward generation from~\eqref{eq:snorm} and~\eqref{eq:nnorm}, respectively.

\begin{theorem} \label{thm:norms}
If $\TT\in\R^{\ell_1\times \ell_2\times \dots \times \ell_{d-2}\times m\times n}$ with $\ell_1\le \ell_2\le \dots\le \ell_{d-2}\le m\le n$ is a tensor of order $d$ where $d\ge3$ and $\ell_{d-2}$ are fixed, then the spectral norm and nuclear norm of $\TT$ can be computed in polynomial time.
\end{theorem}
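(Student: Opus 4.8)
The plan is to reduce the order $d$ case to the order three case treated in Theorem~\ref{thm:snorm} and Corollary~\ref{thm:nnorm} by "flattening" the first $d-2$ modes into a single mode. Concretely, given $\TT\in\R^{\ell_1\times\dots\times\ell_{d-2}\times m\times n}$, I would group the first $d-2$ indices into one multi-index $(i_1,\dots,i_{d-2})$ ranging over a set of size $L:=\prod_{s=1}^{d-2}\ell_s$, producing a tensor $\widehat{\TT}\in\R^{L\times m\times n}$. Since $\ell_1\le\dots\le\ell_{d-2}$ are all fixed, $L$ is a fixed constant, so $\widehat{\TT}$ is an order three tensor with one fixed dimension and Theorem~\ref{thm:snorm} / Corollary~\ref{thm:nnorm} apply to it in polynomial time in $m,n$.

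The key step is to relate the norms of $\TT$ and $\widehat{\TT}$. For the spectral norm I would argue as follows. In the definition~\eqref{eq:snorm} for order $d$, the objective is $\langle\TT,\bx_1\otimes\dots\otimes\bx_{d-2}\otimes\by\otimes\bz\rangle$, which only depends on the first $d-2$ vectors through the rank-one tensor $\bx_1\otimes\dots\otimes\bx_{d-2}$; write $\bw$ for the vectorization of this rank-one tensor, so $\|\bw\|_2=\prod_{s}\|\bx_s\|_2=1$ and $\langle\TT,\bx_1\otimes\dots\otimes\by\otimes\bz\rangle=\langle\widehat{\TT},\bw\otimes\by\otimes\bz\rangle$. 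Hence $\|\TT\|_\sigma\le\|\widehat{\TT}\|_\sigma$, since maximizing over all unit $\bw$ is a relaxation. The heart of the matter is the reverse inequality. Here I would use the fact that an optimal $\bw$ for $\|\widehat{\TT}\|_\sigma$ is actually best-rank-one approximable: by the same geometric identity recalled in the introduction (${\|\widehat{\TT}-\lambda\bw\otimes\by\otimes\bz\|}^2={\|\widehat{\TT}\|}^2-{\|\widehat{\TT}\|_\sigma}^2$ at an optimum), one sees that a maximizing triple $(\bw,\by,\bz)$ for $\widehat{\TT}$ has $\bw$ proportional to $(\sum_{j,k}\widehat{t}_{\bullet jk}y_jz_k)$; but more directly, reshaping $\bw$ back into a $\ell_1\times\dots\times\ell_{d-2}$ array $W$, we do \emph{not} in general get that $W$ is rank one, so the naive reverse bound fails. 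Instead I would proceed by induction on $d$: treating $\TT$ as an order three tensor $\TT'\in\R^{(\prod_{s\le d-3}\ell_s)\times \ell_{d-2}\times(mn)}$ is the wrong grouping; rather, fix the last two modes and apply the order three result to the tensor in $\R^{\ell_{d-2}\times m\times n}$ obtained after the inductive hypothesis has handled modes $1,\dots,d-3$. The cleanest route, and the one I would write up, is: $\|\TT\|_\sigma=\max_{\|\bx_1\|=\dots=\|\bx_{d-3}\|=1}\|\TT(\bx_1,\dots,\bx_{d-3},\cdot,\cdot,\cdot)\|_\sigma$ where the inner object is an order three tensor in $\R^{\ell_{d-2}\times m\times n}$; combining this with the reduction of Lemma~\ref{thm:qp} (now with $\bx_1,\dots,\bx_{d-3}$ as additional variables, contributing $d-3$ more sphere constraints and the multilinear equations becoming polynomial of degree $d-1$) does \emph{not} directly fit Barvinok's quadratic framework, so I would instead linearize by the same flattening trick but applied one mode at a time and invoke the order three theorem as a black box at the base, controlling that the flattened dimension stays bounded.

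Given these subtleties, the approach I would actually commit to is the direct flattening with a careful spectral-norm identity: define $\widehat{\TT}\in\R^{L\times m\times n}$ as above, and prove $\|\TT\|_\sigma=\|\widehat{\TT}\|_\sigma$ by showing that the maximizing $\bw$ can be taken rank one. This last claim is true for the spectral norm: if $(\bw^*,\by^*,\bz^*)$ maximizes $\langle\widehat{\TT},\bw\otimes\by\otimes\bz\rangle$, then for fixed $\by^*,\bz^*$ the vector $\bw^*$ maximizes a linear functional over the unit sphere, hence $\bw^*=\bg/\|\bg\|$ where $\bg$ is the contraction $\widehat{\TT}(\cdot,\by^*,\bz^*)$; but this $\bg$ need not be a rank-one array, so the identity $\|\TT\|_\sigma=\|\widehat{\TT}\|_\sigma$ is in fact \emph{false} in general. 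Therefore the honest plan is the inductive/recursive one: apply Theorem~\ref{thm:snorm} to get that for each fixed unit $\bx_1,\dots,\bx_{d-3}$ one can evaluate the order three spectral norm, then feed the resulting $(d-1)$-th order optimization through Lemma~\ref{thm:qp} generalized to allow $d-1$ vectors, where the system~\eqref{eq:qp} now has the multilinear constraints $\bx_1^\T\cdots$ replaced by degree $d-1$ monomial equations. The main obstacle is precisely this: Barvinok's theorem is about \emph{quadratic} forms, so to keep the polynomial-time guarantee I must reintroduce auxiliary variables to quadratize the degree $d-1$ multilinear form $\langle\TT,\bx_1\otimes\dots\otimes\bz\rangle$ into a chain of $d-2$ bilinear (hence quadratic) equations, introducing $O(\ell_1\cdots\ell_{d-2})$ auxiliary variables along the way --- all of which is a bounded (fixed) number since each $\ell_s$ is fixed. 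Once quadratized, the number of quadratic forms is still a fixed constant (roughly $\ell_{d-2}+d$) and the number of variables is polynomial in $m,n$, so Barvinok's $p^{O(q^2)}$ bound again gives polynomial time, and the bisection argument of Theorem~\ref{thm:snorm} goes through verbatim. The nuclear norm statement then follows, exactly as in Corollary~\ref{thm:nnorm}, from the polynomial-time interreducibility of dual norms~\cite{FL16}. I expect the quadratization bookkeeping --- verifying that the auxiliary-variable gadget faithfully encodes the multilinear form and that the sphere/norm constraints survive --- to be the only genuinely delicate part; everything else is a direct citation of the order three results and Barvinok's theorem.
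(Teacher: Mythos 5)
After discarding the naive flattening (rightly, since the flattened spectral norm of $\widehat{\TT}\in\R^{L\times m\times n}$ does not equal $\|\TT\|_\sigma$ in general), your committed plan---quadratize the multilinear form by a chain of auxiliary variables and bilinear equations, run Barvinok's feasibility test inside a bisection, and obtain the nuclear norm by Friedland--Lim dual-norm interreducibility---is exactly the paper's route, embodied in its Lemma~\ref{thm:qp2} with the recursively defined auxiliary tensors $\U_{d-2},\dots,U_2,\bu_1$. Your count of quadratic forms (``roughly $\ell_{d-2}+d$'') is inaccurate---the chain actually produces $\sum_{k=1}^{d-2}\prod_{i=1}^{k}\ell_i+2$ homogeneous quadratic equations plus one sphere constraint---but since this is still a fixed constant when $d$ and the $\ell_k$ are fixed, the argument is unaffected.
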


We remark that $d$ has to fixed in order for a polynomial-time complexity as otherwise visiting $\prod_{k=1}^dn_k$ entries is already exponential. Essentially from Theorem~\ref{thm:norms}, if all but two of the $d$ dimensions are fixed, then the spectral norm and nuclear norm of an order $d$ tensor are polynomial-time computable while they become NP-hard if three or more dimensions are taken as input parameters, where the phase transition appears. The proof of Theorem~\ref{thm:norms} is quite similar to the case of $d=3$ discussed previously. We leave the details to interested readers but propose the key result (Lemma~\ref{thm:qp2}) that is analogous to Lemma~\ref{thm:qp}.

For the reduction of the spectral norm of an order $d$ tensor to the feasibility of a quadratic system, we need to introduce the tensor contraction with a vector, similar to the matrix multiplication with a vector. A mode-$k$ contraction of a tensor $\TT=(t_{i_1i_2\dots i_d})\in\R^{n_1\times n_2\times\dots\times n_d}$ with a vector $\bx\in\R^{n_k}$, denoted by $\TT\times_k\bx$, is a tensor of order $d-1$ in $\R^{n_1\times\dots \times n_{k-1}\times n_{k+1}\times\dots\times n_d}$, whose $(i_1,\dots,i_{k-1},i_{k+1},\dots,i_d)$-th entry is
$\sum_{j=1}^{n_k} t_{i_1\dots i_{k-1}ji_{k+1}\dots i_d}x_j$ for all $(i_1,\dots,i_{k-1},i_{k+1},\dots,i_d)$. This is the same mode-$k$ product of a tensor with a matrix widely used in the tensor community (see e.g.,~\cite{KB09}) by treating the vector $\bx$ as a $1\times n_k$ matrix. Correspondingly, contractions with more than one vectors are obtained by applying single contractions repeatedly, for instance,
$$
\TT \times_1 \bx_1 \times_2 \bx_2 = (\TT \times_1 \bx_1 ) \times_1 \bx_2=  (\TT \times_2 \bx_2) \times_1 \bx_1.
$$

By introducing extra tensor variables $\U_k\in\R^{\ell_1\times \ell_2\times \dots\times \ell_k}$ of order $k$ recursively for $k=d-2,d-3,\dots,1$, we are able to construct the following reduction of the tensor spectral norm.
\begin{lemma}\label{thm:qp2}
If $\TT\in\R^{\ell_1\times \ell_2\times \dots \times \ell_{d-2}\times m\times n}$ and $\alpha\ge\min_{\|\bx_k\|=1,\,k=2,3,\dots,d} \|\TT \times_2\bx_2 \times_3\bx_3 \dots\times_d \bx_d\|^2$, then the following system of quadratic equations of $(t,\bx_2,\bx_3,\dots,\bx_d,\bu_1,U_2,\U_3,\dots,\U_{d-2})$
\begin{equation*}
\left\{\begin{array}{l}
  \TT\times_{d-1}\bx_{d-1}\times_{d}\bx_d=t\,\U_{d-2} \\
  \U_{d-2}\times_{d-2} \bx_{d-2} = t\,\U_{d-3}\\
  \vdots \\
  \U_3\times_3 \bx_3 = t\,U_2\\
  U_2\times_2\bx_2=t\bu_1 \\
  \sum_{k=2}^{d}\|\bx_k\|^2=(d-1)t^2 \\
  \|\bu_1\|^2=\alpha t^2\\
  t^2+\sum_{k=2}^{d}\|\bx_k\|^2+\|\bu_1\|^2=d+\alpha
\end{array} \right. 
\end{equation*}
has a solution if and only if $\alpha\le {\|\TT\|_\sigma}^2$.
\end{lemma}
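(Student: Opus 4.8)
The plan is to mimic the $d=3$ argument of Lemma~\ref{thm:qp} almost verbatim, with the single ``$\by^{\T}T_i\bz$'' replaced by the chain of contractions and the single auxiliary vector $\bu$ replaced by the tower of auxiliary tensors $\U_{d-2},\dots,U_2,\bu_1$. First I would record the analogue of~\eqref{eq:yzform}: by repeatedly applying Cauchy--Schwarz (first absorb $\bx_1$ optimally, then observe that $\|\TT\times_2\bx_2\cdots\times_d\bx_d\|^2$ is the quantity being maximized over the remaining unit vectors), one gets
\begin{equation*}
  {\|\TT\|_\sigma}^2=\max_{\|\bx_k\|=1,\,k=2,\dots,d}\|\TT\times_2\bx_2\times_3\bx_3\cdots\times_d\bx_d\|^2 .
\end{equation*}
I would also note the telescoping norm identity that drives everything: if $\U_{d-2}:=\TT\times_{d-1}\bx_{d-1}\times_d\bx_d$ and $\U_{k-1}:=\U_k\times_k\bx_k$ for $k=d-2,\dots,2$ (with $U_1=\bu_1$), then since each $\bx_k$ is a unit vector, the chain of Cauchy--Schwarz contractions shows $\|\bu_1\|\le\|U_2\|\le\cdots\le\|\U_{d-2}\|=\|\TT\times_{d-1}\bx_{d-1}\times_d\bx_d\|$, and more to the point $\|\bu_1\|=\|\TT\times_2\bx_2\cdots\times_d\bx_d\|$ exactly when the $\bx_k$ are the ``right'' unit vectors (i.e. the final $\bu_1$ really is the fully contracted scalar/vector), which is what the equality constraints $\U_k\times_k\bx_k=t\U_{k-1}$ encode once $t=1$.

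For the forward direction ($\alpha\le{\|\TT\|_\sigma}^2$): the function $(\bx_2,\dots,\bx_d)\mapsto\|\TT\times_2\bx_2\cdots\times_d\bx_d\|^2$ is continuous on the (connected) product of unit spheres, so its image is the interval $[\,\min,{\|\TT\|_\sigma}^2\,]$; since $\alpha$ lies in this interval by hypothesis, pick unit vectors $\bx_2,\dots,\bx_d$ achieving value exactly $\alpha$, set $t=1$, and define $\U_{d-2},\U_{d-3},\dots,U_2,\bu_1$ by the successive contractions as above. Then the first block of equations holds by construction, $\sum_{k=2}^d\|\bx_k\|^2=(d-1)=(d-1)t^2$, $\|\bu_1\|^2=\alpha=\alpha t^2$, and the last equation reads $1+(d-1)+\alpha=d+\alpha$, all verified. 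So the system is feasible.

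For the reverse direction ($\alpha>{\|\TT\|_\sigma}^2$, suppose a solution exists): exactly as before, substitute the $\sum\|\bx_k\|^2=(d-1)t^2$ and $\|\bu_1\|^2=\alpha t^2$ equations into the last one to get $t^2+(d-1)t^2+\alpha t^2=d+\alpha$, hence $t^2=1$. Next show no $\bx_k$ ($k=2,\dots,d$) vanishes: if some $\bx_{k_0}=\mathbf 0$ then the corresponding contraction kills $\U_{k_0-1}$, which propagates down the chain to force $\bu_1=\mathbf 0$, hence $\alpha t^2=0$, hence $t=0$, contradicting $t^2=1$. Now write $c_k:=\|\bx_k\|>0$. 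Normalizing each $\bx_k$ to $\bx_k/c_k$ and using multilinearity of the contraction in each argument,
\begin{equation*}
  \Big\|\TT\times_2\tfrac{\bx_2}{c_2}\cdots\times_d\tfrac{\bx_d}{c_d}\Big\|^2
  =\frac{1}{\prod_{k=2}^d c_k^2}\,\|\bu_1/t\cdot t\|^2\cdot\frac{1}{?}
\end{equation*}
---more carefully, the equality constraints give $\TT\times_2\bx_2\cdots\times_d\bx_d=t^{d-2}\bu_1$ (peeling off one factor of $t$ at each of the $d-2$ contraction steps), so with $t^2=1$ one has $\|\TT\times_2\bx_2\cdots\times_d\bx_d\|^2=\|\bu_1\|^2=\alpha$, and dividing by $\prod_{k=2}^d c_k^2$ gives $\|\TT\times_2(\bx_2/c_2)\cdots\times_d(\bx_d/c_d)\|^2=\alpha/\prod c_k^2\le{\|\TT\|_\sigma}^2<\alpha$, forcing $\prod_{k=2}^d c_k^2>1$. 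On the other hand, by AM--GM the constraint $\sum_{k=2}^d c_k^2=(d-1)t^2=(d-1)$ gives $\prod_{k=2}^d c_k^2\le\big(\frac{1}{d-1}\sum_{k=2}^d c_k^2\big)^{d-1}=1$. Contradiction, which finishes the proof.

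\textbf{Main obstacle.} The only genuinely nontrivial point is bookkeeping the powers of $t$: in the $d=3$ case the first equation is $\by^{\T}T_i\bz=tu_i$ (one factor of $t$), whereas here the chain of $d-2$ substitutions produces $\TT\times_2\bx_2\cdots\times_d\bx_d=t^{\,d-2}\,\bu_1$, and I must make sure the second-equation constraint $\sum\|\bx_k\|^2=(d-1)t^2$ (not $\|\bx_k\|^2=t^2$ individually!) still yields $\prod c_k^2\le1$ via AM--GM rather than forcing each $c_k=|t|$ separately. Since the system as written only constrains the \emph{sum} of the squared norms, the clean ``$c_k=1$'' shortcut of the $d=3$ proof is unavailable and one really does need the AM--GM step; but because $t^2=1$ is still forced first, all the $t$-powers collapse and the argument goes through. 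A secondary routine check is that the contraction is genuinely multilinear and that $\|\U_k\times_k\bx_k\|\le\|\U_k\|\,\|\bx_k\|$, i.e. the Cauchy--Schwarz/operator-norm bound for a single mode contraction, which underlies the analogue of~\eqref{eq:yzform}.
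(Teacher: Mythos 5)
Your proposal is correct and follows exactly the intended generalization of the paper's $d=3$ proof of Lemma~\ref{thm:qp}: connectedness of the image for the ``if'' direction, and for the ``only if'' direction forcing $t^2=1$, ruling out vanishing $\bx_k$'s via the chain collapsing $\bu_1$ to zero, telescoping the chain to $\TT\times_2\bx_2\cdots\times_d\bx_d=t^{d-2}\bu_1$, and replacing the two-variable inequality $y^2+z^2\ge 2yz$ by AM--GM on $\sum_{k=2}^d\|\bx_k\|^2=(d-1)$ to contradict $\prod_k\|\bx_k\|^2>1$. The only blemish is the abandoned, garbled display before ``more carefully,'' which you immediately correct, so no genuine gap remains.
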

The system has
$\sum_{k=1}^{d-2}\prod_{i=1}^k\ell_i+2$
homogeneous quadratic equations and one sphere constraint with $\sum_{k=1}^{d-2}\prod_{i=1}^k\ell_i+\sum_{k=2}^{d-2}\ell_k+m+n+1$ number of variables.

To conclude this section, the spectral norm and nuclear norm of a tensor $\TT\in\R^{\ell_1\times \ell_2\times \dots \times \ell_{d-2}\times m\times n}$ with $\ell_1\le \ell_2\le \dots\le \ell_{d-2}\le m\le n$ can be computed in polynomial time if and only if $\prod_{k=1}^{d-2}\ell_k$ is deemed as a fixed value.

\section{Fully polynomial-time approximation schemes}\label{sec:fptas}

The polynomial-time algorithms discussed in Section~\ref{sec:polynomial} are not implementable in practice. However, the results always encourage searching practical algorithms. To balance the implementation and the computational complexity, we are able to derive FPTAS for both the spectral and nuclear norms of tensors in $\R^{\ell\times m\times n}$ with fixed $\ell$. An FPTAS is an algorithm with complexity in a polynomial of $m$, $n$, $\frac{1}{\epsilon}$ and the number of bits in the data, such that $\|\TT\|_\sigma$ or $\|\TT\|_*$ can be approximated within an error $\epsilon$ for any $\epsilon>0$. The essential difference lies in $\frac{1}{\epsilon}$ for FPTAS while $\ln\frac{1}{\epsilon}$ for polynomial-time algorithms.

To compute the spectral norm~\eqref{eq:snorm}, one needs to decide three unit vectors $\bx,\by,\bz$ such that $\langle\TT,\bx\otimes \by\otimes \bz\rangle$ is maximized. In fact, if one of the them is known, then the problem reduces to the matrix spectral norm which can be easily computed. Since $\bx$ is bounded and $\ell$, the dimension of the space that $\bx$ belongs to, is fixed and small in practice, we may use a polytope to approximate the unit sphere $\{\bx\in\R^\ell:\|\bx\|=1\}$. One straightforward approach is to apply the spherical coordinate system.

\subsection{Polytope approximation to the sphere}

In the Euclidean space $\R^\ell$ where $\ell\ge2$, any point $\bx=(x_1,x_2,\dots,x_\ell)^{\T}$ with $\|\bx\|=1$ can be represented by the spherical coordinate system $\phi(\bx)=(\phi_1(x),\phi_2(x),\dots,\phi_{\ell-1}(x))^{\T}$ such that
\begin{align*}
x_{1}&=\cos\phi _{1}\\x_{2}&=\sin\phi _{1}\cos\phi _{2}\\x_{3}&=\sin\phi _{1}\sin\phi _{2}\cos\phi _{3}\\&\;\;\vdots \\x_{\ell-1}&=\sin\phi _{1}\dots \sin\phi _{\ell-2}\cos\phi _{\ell-1}\\x_{\ell}&=\sin\phi _{1}\dots \sin\phi _{\ell-2}\sin\phi _{\ell-1},
\end{align*}
where $0\le \phi_{\ell-1}<2\pi$ and $0\le \phi_i\le\pi$ for $i=1,2,\dots, \ell-2$.

Like the longitudes and latitudes of the Earth, we can let $\delta = \frac{\pi}{q}$ for $q\in\BN$ and define a set of grid points on the unit sphere as
\begin{equation}\label{eq:sphere}
\BS(\ell,q):=\left\{\bx\in\R^\ell:\|\bx\|=1,\,\phi(\bx)\in\left\{0,\delta,2\delta,\dots,(q-1)\delta \right\}^{\ell-2}\times \left\{0,\delta,2\delta,\dots,(2q-1)\delta \right\} \right\}.
\end{equation}
Obviously the number of vertices of the polytope formed by $\BS(\ell,q)$ is no more than $2q^{\ell-1}$. We also denote $\BS(\ell,\infty)$ to be the unit sphere $\{\bx\in\R^\ell:\|\bx\|=1\}$.

For any vector $\by\in\BS(\ell,\infty)$ with the spherical coordinates $\phi(\by)$, there must exist $\bx\in \BS(\ell,q)$ with the spherical coordinates $\phi(\bx)$, such that
$$
|\phi_k(x)-\phi_k(y)|\le \frac{\delta}{2}\quad k=1,2,\dots, \ell-1.
$$
Since the Euclidean distance is no more than the spherical distance, one has
$$
\|\bx-\by\| \le \|\phi(\bx)-\phi(\by)\|\le \frac{\delta}{2}\sqrt{\ell-1}=\frac{\pi\sqrt{\ell-1}}{2q}.
$$
As $\|\bx\|=\|\by\|=1$, the above implies that
$$
\bx^{\T}\by = \frac{1}{2}\left(2-\|\bx-\by\|^2\right)\ge \frac{1}{2}\left(2-\frac{\pi^2(\ell-1)}{4q^2}\right)=
1-\frac{\pi^2(\ell-1)}{8q^2}.
$$
To summarize, we have the following.
\begin{lemma}\label{thm:grid}
If $\delta = \frac{\pi}{q}$ for some $q\in\BN$ and $\BS(\ell,q)$ is defined as~\eqref{eq:sphere}, then
$$
\min_{\|\bx\|=1} \max_{\bu\in\BS(\ell,q)}\bu^{\T}\bx \ge 1-\frac{\pi^2(\ell-1)}{8q^2}.
$$
\end{lemma}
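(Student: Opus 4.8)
The plan is to establish the bound for \emph{every} unit vector $\bx$, not merely after taking the minimum: for each $\bx$ with $\|\bx\|=1$ I will exhibit a grid point $\bu\in\BS(\ell,q)$ with $\bu^{\T}\bx\ge1-\frac{\pi^2(\ell-1)}{8q^2}$, so that $\min_{\|\bx\|=1}\max_{\bu\in\BS(\ell,q)}\bu^{\T}\bx$ inherits the same lower bound. Since $\bu^{\T}\bx=\frac12\bigl(2-\|\bu-\bx\|^2\bigr)$ whenever $\|\bu\|=\|\bx\|=1$, it suffices to find $\bu\in\BS(\ell,q)$ with $\|\bu-\bx\|^2\le\frac{\pi^2(\ell-1)}{4q^2}$, i.e.\ with $\|\bu-\bx\|\le\frac{\pi\sqrt{\ell-1}}{2q}$.

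To produce such a $\bu$, I would pass to spherical coordinates. Write $\phi(\bx)=(\phi_1,\dots,\phi_{\ell-1})$. By definition $\BS(\ell,q)$ is the image, under the coordinate map, of the lattice $\{0,\delta,\dots,(q-1)\delta\}^{\ell-2}\times\{0,\delta,\dots,(2q-1)\delta\}$ with $\delta=\pi/q$. Because $q\delta=\pi$ and $2q\delta=2\pi$, each of the first $\ell-2$ coordinate ranges $[0,\pi]$ and the last range $[0,2\pi)$ (the latter read cyclically) is sampled so that every value lies within $\delta/2$ of some lattice point; choosing a nearest lattice point in each coordinate yields $\bu\in\BS(\ell,q)$ with $|\phi_k(\bu)-\phi_k|\le\delta/2$ for all $k$, and therefore $\|\phi(\bu)-\phi(\bx)\|\le\frac{\delta}{2}\sqrt{\ell-1}=\frac{\pi\sqrt{\ell-1}}{2q}$.

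The heart of the proof is then the comparison $\|\bu-\bx\|\le\|\phi(\bu)-\phi(\bx)\|$, which I would obtain in two steps. First, the parametrization $(\phi_1,\dots,\phi_{\ell-1})\mapsto\bx$ is $1$-Lipschitz from the Euclidean box into the unit sphere with its \emph{geodesic} metric: in these coordinates the induced metric $J^{\T}J$ ($J$ the Jacobian) is diagonal with $k$-th entry $\prod_{j<k}\sin^2\phi_j\le1$, so $J$ has operator norm at most $1$, and integrating the velocity along the straight segment from $\phi(\bx)$ to $\phi(\bu)$ bounds the geodesic length of its image by $\|\phi(\bu)-\phi(\bx)\|$. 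Second, the chordal (Euclidean) distance between two points of the unit sphere never exceeds their geodesic distance. Chaining the two gives $\|\bu-\bx\|\le\|\phi(\bu)-\phi(\bx)\|\le\frac{\pi\sqrt{\ell-1}}{2q}$, hence $\bu^{\T}\bx\ge1-\frac{\pi^2(\ell-1)}{8q^2}$; as $\bx$ was arbitrary with $\|\bx\|=1$, the inequality $\max_{\bu\in\BS(\ell,q)}\bu^{\T}\bx\ge1-\frac{\pi^2(\ell-1)}{8q^2}$ holds for all such $\bx$, and taking the minimum over $\bx$ completes the argument.

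I expect the main obstacle to be the Lipschitz estimate $\|\bu-\bx\|\le\|\phi(\bu)-\phi(\bx)\|$ — in particular the computation that the columns of the Jacobian are mutually orthogonal with norms $\prod_{j<k}\sin\phi_j\le1$ — together with the bookkeeping at the ends of the coordinate ranges (the value $\pi$ for the first $\ell-2$ angles, and the $2\pi$-identification for the last), where one must be sure a lattice point within $\delta/2$ genuinely exists. Everything after that is the elementary unit-sphere identity $\bu^{\T}\bx=1-\tfrac12\|\bu-\bx\|^2$ and the substitution $\delta=\pi/q$.
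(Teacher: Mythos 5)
Your argument is essentially the paper's own proof: pick the grid point nearest in spherical coordinates, bound the chordal distance by the coordinate distance, and finish with $\bu^{\T}\bx=1-\tfrac12\|\bu-\bx\|^2$; your Jacobian/geodesic computation simply fills in the inequality $\|\bu-\bx\|\le\|\phi(\bu)-\phi(\bx)\|$ that the paper asserts in one line, and that part is correct (the induced metric is $d\phi_1^2+\sin^2\phi_1\,d\phi_2^2+\cdots$, so the parametrization is $1$-Lipschitz into the geodesic metric and chordal distance is at most geodesic distance). The one step you flag as ``bookkeeping'' but do not actually settle---that every angle lies within $\delta/2$ of a lattice value---is fine for the cyclic last coordinate but fails for the polar coordinates as $\BS(\ell,q)$ is literally defined: the grid for $\phi_k$ with $k\le\ell-2$ stops at $(q-1)\delta=\pi-\delta$, so angles in $(\pi-\delta/2,\pi]$ are only within $\delta$ of a grid value; e.g.\ for $\ell=3$ and $\bx=(-1,0,0)^{\T}$ one gets $\max_{\bu\in\BS(3,q)}\bu^{\T}\bx=\cos\delta$, which is strictly below $1-\frac{\pi^2}{4q^2}$ for every $q\ge2$. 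This boundary defect is shared verbatim with the paper's proof (it asserts the same $\delta/2$ covering); it is repaired by adjoining the value $q\delta=\pi$ to the polar ranges or by relaxing the constant, with no effect on the FPTAS conclusions, but as written neither your argument nor the paper's establishes the stated constant near those poles.
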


\subsection{FPTAS for the spectral norm}

Let us now apply Lemma~\ref{thm:grid} to derive an FPTAS to compute $\|\TT\|_\sigma$ for fixed $\ell$. We denote
$$
\TT(\bx,\by,\bz)=\langle\TT,\bx\otimes\by\otimes \bz\rangle = \sum_{i=1}^\ell \sum_{j=1}^m \sum_{k=1}^n t_{ijk}x_iy_jz_k
$$
to be the trilinear function generated by the tensor $\TT$. If one vector entry, say $\bx$ is missing and replaced by $\bullet$, then $\TT(\bullet,\by,\bz)$ becomes a vector in $\R^\ell$, i.e.,
$$
\mbox{the $i$th component of } \TT(\bullet,\by,\bz) = \sum_{j=1}^m \sum_{k=1}^n t_{ijk}y_jz_k \quad i=1,2,\dots,\ell.
$$
Similarly, $\TT(\bullet,\bullet,\bz)$ defines a matrix in $\R^{\ell\times m}$, i.e.,
$$
\mbox{the $(i,j)$th component of } \TT(\bullet,\bullet,\bz) = \sum_{k=1}^n t_{ijk}z_k \quad i=1,2,\dots,\ell,\,j=1,2,\dots,m.
$$

\begin{lemma}\label{thm:appsnorm}
For a given set of unit vectors $\BS\subseteq \BS(\ell,\infty)$, if $\min_{\|\bx\|=1}\max_{\bu\in \BS}\bu^{\T}\bx = \theta$, then
$$\max_{\bx\in \BS} \|\TT(\bx,\bullet,\bullet)\|_\sigma \ge \theta \|\TT\|_\sigma$$
\end{lemma}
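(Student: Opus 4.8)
The plan is to use the fact that maximising the trilinear form $\TT(\bx,\by,\bz)$ over $\bx$ for fixed $\by,\bz$ is just maximising a linear functional over a sphere, so the optimal first-argument vector is forced to point in the direction of $\TT(\bullet,\by,\bz)$. First I would dispose of the degenerate case $\|\TT\|_\sigma=0$: the right-hand side is then $0$, while the left-hand side is a maximum of matrix spectral norms and hence nonnegative, so the inequality holds trivially. Thus assume $\|\TT\|_\sigma>0$ and fix a maximiser $(\bx^*,\by^*,\bz^*)$ of~\eqref{eq:snorm}, i.e. unit vectors with $\TT(\bx^*,\by^*,\bz^*)=\|\TT\|_\sigma$.

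The key step I would carry out next is to set $\bt^*:=\TT(\bullet,\by^*,\bz^*)\in\R^\ell$, so that $\TT(\bx,\by^*,\bz^*)=\bx^{\T}\bt^*$ for every $\bx$. By optimality of $(\bx^*,\by^*,\bz^*)$, one has $\bx^{\T}\bt^*=\TT(\bx,\by^*,\bz^*)\le\TT(\bx^*,\by^*,\bz^*)=(\bx^*)^{\T}\bt^*$ for all unit $\bx$, so $\bx^*$ maximises the linear functional $\bx\mapsto\bx^{\T}\bt^*$ over the unit sphere of $\R^\ell$. Since Cauchy–Schwarz shows this functional attains its spherical maximum uniquely at $\bt^*/\|\bt^*\|$ (and $\bt^*\neq\bf 0$, as otherwise $\|\TT\|_\sigma=0$), this forces $\bx^*=\bt^*/\|\bt^*\|$ and $\|\bt^*\|=(\bx^*)^{\T}\bt^*=\|\TT\|_\sigma$, i.e. $\bt^*=\|\TT\|_\sigma\,\bx^*$.

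Then, for any $\bu\in\BS$, I would view $\TT(\bu,\bullet,\bullet)$ as a matrix in $\R^{m\times n}$ and test it against the unit vectors $\by^*,\bz^*$ to get
$$
\|\TT(\bu,\bullet,\bullet)\|_\sigma\ge(\by^*)^{\T}\TT(\bu,\bullet,\bullet)\,\bz^*=\TT(\bu,\by^*,\bz^*)=\bu^{\T}\bt^*=\|\TT\|_\sigma\,(\bu^{\T}\bx^*).
$$
Taking the maximum over $\bu\in\BS$, using $\|\TT\|_\sigma\ge0$, and invoking the hypothesis $\max_{\bu\in\BS}\bu^{\T}\bx^*\ge\min_{\|\bx\|=1}\max_{\bu\in\BS}\bu^{\T}\bx=\theta$ yields
$$
\max_{\bx\in\BS}\|\TT(\bx,\bullet,\bullet)\|_\sigma\ge\|\TT\|_\sigma\max_{\bu\in\BS}(\bu^{\T}\bx^*)\ge\theta\,\|\TT\|_\sigma,
$$
which is the assertion. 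The only point requiring a moment's care is the identification $\bt^*=\|\TT\|_\sigma\,\bx^*$, namely that the optimal $\bx^*$ really is the normalised version of $\TT(\bullet,\by^*,\bz^*)$; everything else is routine bookkeeping with the matrix spectral-norm inequality, and I do not anticipate any genuine obstacle.
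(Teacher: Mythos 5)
Your proposal is correct and follows essentially the same route as the paper: identify the optimal first factor as the normalized vector $\TT(\bullet,\by^*,\bz^*)/\|\TT\|_\sigma$ via Cauchy--Schwarz, pick a grid vector $\bu\in\BS$ with $\bu^{\T}\bx^*\ge\theta$, and test the matrix $\TT(\bu,\bullet,\bullet)$ against $(\by^*,\bz^*)$. Your explicit handling of the degenerate case $\|\TT\|_\sigma=0$ is a harmless extra precaution not spelled out in the paper.
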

\begin{proof}
Denote $(\bx_0,\by_0,\bz_0)$ to be an optimal solution of~\eqref{eq:snorm}, i.e., $\TT(\bx_0,\by_0,\bz_0)=\|\TT\|_\sigma$ with $\|\bx_0\|=\|\by_0\|=\|\bz_0\|=1$. Since $\TT(\bx_0,\by_0,\bz_0)=\langle \bx_0, \TT(\bullet,\by_0,\bz_0) \rangle$, by the optimality of $\bx_0$ and Cauchy-Schwarz inequality one has $\|\TT\|_\sigma=\|\TT(\bullet,\by_0,\bz_0)\|$ and $$\bx_0=\frac{\TT(\bullet,\by_0,\bz_0)}{\|\TT(\bullet,\by_0,\bz_0)\|} =\frac{\TT(\bullet,\by_0,\bz_0)}{\|\TT\|_\sigma}.$$
As $\max_{\bu\in \BS}\bu^{\T}\bx_0\ge\min_{\|\bx\|=1}\max_{\bu\in \BS}\bu^{\T}\bx = \theta$, there exists $\bv\in\BS$ such that $\bv^{\T}\bx_0\ge \theta$, implying that
$$
\max_{\|\by\|=\|\bz\|=1} \TT(\bv,\by,\bz) \ge \TT(\bv,\by_0,\bz_0) = \langle \bv, \TT(\bullet,\by_0,\bz_0) \rangle
=\langle \bv, \bx_0\|\TT\|_\sigma \rangle \ge \theta\|\TT\|_\sigma.
$$
Therefore, we obtain
$$
\max_{\bx\in\BS} \|\TT(\bx,\bullet,\bullet)\|_\sigma
\ge \|\TT(\bv,\bullet,\bullet)\|_\sigma
= \max_{\|\by\|=\|\bz\|=1} \TT(\bv,\by,\bz) \ge \theta\|\TT\|_\sigma.
$$
\end{proof}

According to Lemma~\ref{thm:grid}, $\min_{\|\bx\|=1} \max_{\bu\in\BS(\ell,q)}\bu^{\T}\bx \ge 1-\frac{\pi^2(\ell-1)}{8q^2}$. Therefore, by Lemma~\ref{thm:appsnorm}, one has
$$
\max_{\bx\in\BS(\ell,q)} \|\TT(\bx,\bullet,\bullet)\|_\sigma
\ge  \left(1-\frac{\pi^2(\ell-1)}{8q^2}\right)\|\TT\|_\sigma =
\|\TT\|_\sigma - \frac{\pi^2(\ell-1)}{8q^2}\|\TT\|_\sigma,
$$
i.e., the distance between $\max_{\bx\in\BS(\ell,q)} \|\TT(\bx,\bullet,\bullet)\|_\sigma$ to $\|\TT\|_\sigma$ is at most $\frac{\pi^2(\ell-1)}{8q^2}\|\TT\|_\sigma$. To guarantee this distance no more than $\epsilon$ for any $\epsilon>0$, a computable upper bound of $\|\TT\|_\sigma$ is required, say $\|\mat(\TT)\|_\sigma$ in~\eqref{eq:upper}. Then, in order for $\frac{\pi^2(\ell-1)}{8q^2} \|\mat(\TT)\|_\sigma\le \epsilon$, we need
$$
q\ge \left(\frac{\pi^2(\ell-1)\|\mat(\TT)\|_\sigma}{8\epsilon}\right)^{\frac{1}{2}}.
$$

Finally, to compute $\max_{\bx\in\BS(\ell,q)} \|\TT(\bx,\bullet,\bullet)\|_\sigma$, we do not need to enumerate every $\bx\in\BS(\ell,q)$. Noticing that $\|\TT(\bx,\bullet,\bullet)\|_\sigma=\|\TT(-\bx,\bullet,\bullet)\|_\sigma$ since $\TT(\bx,\bullet,\bullet)=\sum_{i=1}^\ell x_iT_i$, we may only consider a hemisphere to be approximated. Therefore, instead of searching over $\BS(\ell,q)$ in~\eqref{eq:sphere}, we can try
\begin{equation}\label{eq:hemisphere}
\BH(\ell,q):=\left\{\bx\in\R^\ell: \|\bx\|=1,\,\phi(\bx)\in\left\{0,\delta,2\delta,\dots,(q-1)\delta \right\}^{\ell-1}\right\},
\end{equation}
where $\delta=\frac{\pi}{q}$. The number of distinct vectors in $\BH(\ell,q)$ is asymptotically $(q-1)^{\ell-1}$. The exact number is given below.

\begin{proposition}
The number of spherical points in $\BH(\ell,q)$ is $\frac{(q-1)^\ell-1}{q-2}$ if $q\ge3$ or $\ell$ if $q=2$.
\end{proposition}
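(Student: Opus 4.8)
The plan is to count $|\BH(\ell,q)|$ by a recursion on $\ell$ obtained by conditioning on the first spherical angle $\phi_1$, and then to evaluate the resulting geometric series.

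First I would pin down the only degeneracy of the spherical parametrization on the allowed grid. Among the admissible values $\phi_i\in\{0,\delta,2\delta,\dots,(q-1)\delta\}$ with $\delta=\frac{\pi}{q}$, one has $(q-1)\delta=\frac{(q-1)\pi}{q}<\pi$, so $\sin\phi_i=0$ forces $\phi_i=0$, and $\cos$ is injective on $[0,(q-1)\delta]$. Hence for a point $\bx\in\BH(\ell,q)$ the first angle is recovered as $\phi_1=\arccos x_1$, i.e.\ it is determined by $\bx$. If $\phi_1=0$ then every remaining coordinate $x_2,\dots,x_\ell$ carries a factor $\sin\phi_1=0$, so $\bx=(1,0,\dots,0)^{\T}$ no matter what $\phi_2,\dots,\phi_{\ell-1}$ are; conversely this is the only tuple giving $(1,0,\dots,0)^{\T}$. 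If $\phi_1\neq0$ then $\sin\phi_1>0$ and $(x_2,\dots,x_\ell)^{\T}/\sin\phi_1$ is precisely the point of $\R^{\ell-1}$ with spherical coordinates $(\phi_2,\dots,\phi_{\ell-1})$, hence an element of $\BH(\ell-1,q)$, and this correspondence is a bijection between the points of $\BH(\ell,q)$ with this fixed $\phi_1$ and the set $\BH(\ell-1,q)$.

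From this I get, for $\ell\ge3$, the recursion
$|\BH(\ell,q)|=1+(q-1)\,|\BH(\ell-1,q)|$:
one point from $\phi_1=0$, and for each of the $q-1$ nonzero choices of $\phi_1$ a scaled copy of $\BH(\ell-1,q)$, these $q-1$ copies and the apex being pairwise disjoint because distinct values of $\phi_1$ give distinct first coordinates $\cos\phi_1$. The base case is $\ell=2$, where $\BH(2,q)=\{(\cos k\delta,\sin k\delta)^{\T}:k=0,1,\dots,q-1\}$ has exactly $q$ elements since $k\delta\in[0,\pi)$. Unrolling the recursion yields $|\BH(\ell,q)|=\sum_{k=0}^{\ell-1}(q-1)^k$, which is $\frac{(q-1)^\ell-1}{q-2}$ when $q\ge3$ and is $\ell$ when $q=2$.

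The argument is essentially routine; the one place that needs care is the collision analysis in the second paragraph — namely ruling out any $\phi_i=\pi$ (which is exactly what $(q-1)\delta<\pi$ buys us, and is the only other way $\sin\phi_i$ could vanish), and verifying that distinct nonzero values of $\phi_1$ yield genuinely distinct points, so that the recursion neither double-counts the apex nor merges two of the lower-dimensional copies.
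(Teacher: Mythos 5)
Your proof is correct and is essentially the paper's argument in inductive clothing: the paper counts directly by stratifying according to the first vanishing angle (giving $(q-1)^{\ell-1}+\sum_{k=1}^{\ell-1}(q-1)^{k-1}$), and your recursion $|\BH(\ell,q)|=1+(q-1)|\BH(\ell-1,q)|$ unrolls to exactly that geometric series. Your explicit handling of the degeneracies (only $\phi_i=0$ collapses coordinates since $(q-1)\delta<\pi$, and distinct $\phi_1$ values give distinct first coordinates) just makes precise what the paper's terser count leaves implicit.
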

\begin{proof}
Consider all the combinations of $\phi_1(x),\phi_2(x),\dots,\phi_{\ell-1}(x)$. If none of them is zero, then we have $(q-1)^{\ell-1}$ distinct vectors. However, if $\phi_k(x)=0$ for some $k$, then the value of $\phi_i(x)$ for any $i> k$ is irrelevant as $x_{k+1},x_{k+2},\dots,x_\ell$ will all vanish.

Suppose that $\phi_1(x),\phi_1(x),\dots,\phi_{k-1}(x)$ are all positive while $\phi_k(x)=0$, the number of such unit vectors in $\BH(\ell,q)$ is $(q-1)^{k-1}$. Therefore, the total number of vectors in $\BH(\ell,q)$ is
$$
(q-1)^{\ell-1}+\sum_{k=1}^{\ell-1}(q-1)^{k-1}=\sum_{k=0}^{\ell-1}(q-1)^{k}=\frac{(q-1)^\ell-1}{q-2}
$$
if $q\ge3$ or $\ell$ if $q=2$.
\end{proof}

\begin{algorithm} \label{alg:snorm} An FPTAS to compute the spectral norm of $\TT\in\R^{\ell\times m\times n}$ with an error $\epsilon>0$
\begin{enumerate}
  \item[1] Compute  $q=\left\lceil\left(\frac{\pi^2(\ell-1)\|\mat(\TT)\|_\sigma}{8\epsilon}\right)^{\frac{1}{2}}\right\rceil$ where $\mat(\TT)$ is defined in~\eqref{eq:upper}.
  \item[2] Compute and output $\max_{\bx\in\BH(\ell,q)} \|\TT(\bx,\bullet,\bullet)\|_\sigma$ where $\BH(\ell,q)$ is defined in~\eqref{eq:hemisphere}.
\end{enumerate}
\end{algorithm}

The output provides a lower bound of $\|\TT\|_\sigma$ whose error has been already guaranteed. If one prefers an upper bound with the same error $\epsilon$, one can use the output divided by $\left(1-\frac{\pi^2(\ell-1)}{8q^2}\right)$. If one prefers a relative error $\epsilon\|\TT\|_\sigma$, then $\|\mat(\TT)\|_\sigma$, as an upper bound of $\|\TT\|_\sigma$, can be removed in the formula of $q$. For the computational complexity of Algorithm~\ref{alg:snorm}, it is dominated by the second step, which needs to calculate roughly $\left(\frac{\pi^2(\ell-1)\|\mat(\TT)\|_\sigma}{8\epsilon}\right)^{\frac{\ell-1}{2}}$ number of spectral norms of $m\times n$ matrices, a polynomial of $\frac{1}{\epsilon}$ for fixed $\ell$. If we omit $\|\mat(\TT)\|_\sigma$ and consider the relative error, the number of spectral norm computations are $1.11\epsilon^{-0.5}$, $2.47\epsilon^{-1}$, $7.12\epsilon^{-1.5}$, $24.35\epsilon^{-2}$, $94.50\epsilon^{-2.5}$, $405.59\epsilon^{-3}$ for $\ell=2,3,4,5,6,7$, respectively.

For an optimal solution $(\bx_0,\by_0,\bz_0)$ of~\eqref{eq:snorm}, this can be obtained in the second step. In particular, $\bx_0$ is the one that maximizes $\|\TT(\bx,\bullet,\bullet)\|_\sigma$ and $(\by_0,\bz_0)$ are the left and right singular vectors corresponding to the largest singular value of $\TT(\bx_0,\bullet,\bullet)$.

We remark that the polytope formed by $\BH(\ell,q)$ to approximate the hemisphere is not optimal in terms of minimizing the error. The error around the equator is much larger than that around the pole. However, $\BH(\ell,q)$ is a very simple approximation that guarantees Algorithm~\ref{alg:snorm} to be an FPTAS. There are a few works toward more balanced approximations of the sphere. For example, B\"{o}r\"{o}czky and Wintsche~\cite[Corollary 1.2]{BW03} showed that for any $0<\theta\le\arccos\frac{1}{\sqrt{\ell}}$, the unit sphere $\BS(\ell,\infty)$ can be covered by $$\frac{c\cos\theta}{\sin^{\ell-1}\theta} (\ell-1)^{\frac{3}{2}} \ln(1+(\ell-1)\cos^2\theta)$$ spherical caps of angular radius $\theta$ where $c$ is a universal constant. The error caused by a spherical cap of angular radius $\theta$ is then $\epsilon=1-\cos\theta=2\sin^2\frac{\theta}{2}\approx \frac{\theta^2}{2}$ for small $\theta$. For the same error to Algorithm~\ref{alg:snorm}, the number of spherical caps keeps the same $\left(\frac{1}{\epsilon}\right)^{\frac{\ell-1}{2}}$ for $\epsilon$ but reduces the order $(\ell-1)^{\frac{\ell-1}{2}}$ of Algorithm~\ref{alg:snorm} to $(\ell-1)^{\frac{3}{2}}\ln \ell$ if $\ell\ge5$. However, the proof of existence in~\cite{BW03} is based on randomization and cannot be used to construct a theoretically provable FPTAS for the tensor spectral norm. In practice as we will see in the experiments in Section~\ref{sec:random} for small $\ell$'s, a better polytope approximation of the hemisphere does reduce the error.

To the best of our knowledge, Algorithm~\ref{alg:snorm} is the first method toward the global optimum for the tensor spectral norm problem~\eqref{eq:snorm} other than the sum-of-squares hierarchy approach~\cite{L01,KL22}. The sum-of-squares approach is for general polynomial optimization and it will transfer~\eqref{eq:snorm} to a polynomial optimization problem in $\ell+m+n$ decision variables, making it impossible for large $\ell+m+n$. As from the above discussion, Algorithm~\ref{alg:snorm} works for small $\ell$ but can be suitable for very large $m$ and $n$. Besides, one can certainly apply some local improvement methods to the final solution albeit the error decreasing is not theoretically guaranteed.

\subsection{FPTAS for the nuclear norm}

Let us turn to the nuclear norm problem~\eqref{eq:nnorm}. The dual formulation gives $\|\TT\|_*= \max_{\|\Z\|_\sigma\le 1}\langle\TT,\Z\rangle$. Obviously the difficulty lies in the spectral norm constraint $\|\Z\|_\sigma\le 1$. By applying the definition of the spectral norm~\eqref{eq:snorm}, one has
\begin{align}
 \|\TT\|_*& = \max \left\{\langle\TT,\Z\rangle: \|\Z\|_{\sigma}\le 1\right\}\nonumber \\
    &= \max \left\{\langle\TT,\Z\rangle: \Z(\bx,\by,\bz)\le1 \mbox{ for all } \|\bx\|=\|\by\|=\|\bz\|=1\right\} \nonumber \\
    &= \max \left\{\langle\TT,\Z\rangle: \max_{\|\by\|=\|\bz\|=1}\Z(\bx,\by,\bz)\le1 \mbox{ for all } \|\bx\|=1\right\} \nonumber \\
& = \max \left\{\langle\TT,\Z\rangle:\|\Z(\bx,\bullet,\bullet)\|_\sigma \le 1\mbox{ for all } \|\bx\|=1\right\}\nonumber \\
& = \max \left\{\langle\TT,\Z\rangle: I\succeq(\Z(\bx,\bullet,\bullet))^{\T}(\Z(\bx,\bullet,\bullet)) \mbox{ for all } \|\bx\|=1\right\}\nonumber \\
& = \max \left\{\langle\TT,\Z\rangle: \left[\begin{array}{cc}
I &\Z(\bx,\bullet,\bullet) \\
(\Z(\bx,\bullet,\bullet))^{\T} & I
\end{array} \right] \succeq O \mbox{ for all } \|\bx\|=1\right\}, \label{eq:sdp1}
\end{align}
where a symmetric matrix $A\succeq O$ means $A$ is positive semidefinite and $A\succeq B$ means $A-B\succeq O$, and the last equation is due to the Schur complement.

Denote the tensor variable $\Z$ in~\eqref{eq:sdp1} to be $(Z_1|Z_2|\dots|Z_\ell)$ with $Z_i\in\R^{m\times n}$ for $i=1,2,\dots,\ell$. For a given $\bx\in\R^\ell$, $\Z(\bx,\bullet,\bullet)=\sum_{i=1}^\ell x_i Z_i$ and $\langle\TT,\Z\rangle=\sum_{i=1}^\ell \langle T_i,Z_i\rangle$. Therefore,~\eqref{eq:sdp1} is a semidefinite program with $\ell$ number of $m\times n$ matrices but infinite number of constraints. If we only choose a polynomial number of vectors in the unit sphere instead of all vectors satisfying $\|\bx\|=1$, we obtain a relaxation of~\eqref{eq:sdp1} and can be solved in polynomial time. This reminds us applying the polytope approximation to obtain FPTAS for the spectral norm.

\begin{lemma}\label{thm:appnnorm}
For a given set of unit vectors $\BS\subseteq \BS(\ell,\infty)$, if $\min_{\|\bx\|=1}\max_{\bu\in \BS}\bu^{\T}\bx = \theta>0$, then
  $$\|\TT\|_*\le \max \left\{\langle\TT,\Z\rangle: \left[\begin{array}{cc}
I &\Z(\bx,\bullet,\bullet) \\
(\Z(\bx,\bullet,\bullet))^{\T} & I
\end{array} \right] \succeq O \mbox{ for all } \bx\in\BS\right\}\le \frac{\|\TT\|_*}{\theta}.$$
\end{lemma}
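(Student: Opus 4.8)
Denote by $\mu(\BS)$ the middle quantity in the statement, i.e.\ the optimal value of the relaxed semidefinite program with the block-matrix constraint imposed only for $\bx\in\BS$. The plan is to recognise $\mu(\BS)$ as a relaxation of the semidefinite reformulation~\eqref{eq:sdp1} of $\|\TT\|_*$, deduce the left inequality from monotonicity of the optimal value in the size of the feasible set, and obtain the right inequality by a Cauchy-Schwarz and rescaling argument that mirrors the proof of Lemma~\ref{thm:appsnorm}.

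For the left inequality, I would first recall from the derivation of~\eqref{eq:sdp1} that, for each fixed $\bx$, the block matrix appearing in the statement is positive semidefinite if and only if $\|\Z(\bx,\bullet,\bullet)\|_\sigma\le1$ (Schur complement, together with the equivalence $I\succeq A^{\T}A\Leftrightarrow\|A\|_\sigma\le1$). Hence the feasible set of $\mu(\BS)$ is $\{\Z:\|\Z(\bx,\bullet,\bullet)\|_\sigma\le1\text{ for all }\bx\in\BS\}$, and since $\BS\subseteq\BS(\ell,\infty)$ this set contains the feasible set $\{\Z:\|\Z(\bx,\bullet,\bullet)\|_\sigma\le1\text{ for all }\|\bx\|=1\}$ of problem~\eqref{eq:sdp1}, whose optimal value is $\|\TT\|_*$. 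Maximising the same linear objective $\langle\TT,\Z\rangle$ over a larger set gives $\mu(\BS)\ge\|\TT\|_*$.

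For the right inequality, I would take an optimal solution $\Z^*$ of $\mu(\BS)$ (the maximum is attained: $\theta>0$ forbids $\BS$ from lying in a proper subspace of $\R^\ell$, so the spectral-norm constraints associated with $\ell$ linearly independent vectors of $\BS$ already confine $\Z$ to a compact set) and show that $\theta\Z^*$ is feasible for the dual formula $\|\TT\|_*=\max_{\|\Z\|_\sigma\le1}\langle\TT,\Z\rangle$, i.e.\ $\|\Z^*\|_\sigma\le\frac1\theta$. To this end, let $(\bx_0,\by_0,\bz_0)$ with $\|\bx_0\|=\|\by_0\|=\|\bz_0\|=1$ attain $\Z^*(\bx_0,\by_0,\bz_0)=\|\Z^*\|_\sigma$; Cauchy-Schwarz applied to $\Z^*(\bx_0,\by_0,\bz_0)=\langle\bx_0,\Z^*(\bullet,\by_0,\bz_0)\rangle$ forces $\Z^*(\bullet,\by_0,\bz_0)=\|\Z^*\|_\sigma\,\bx_0$. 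Picking $\bv\in\BS$ with $\bv^{\T}\bx_0\ge\theta$, which exists since $\max_{\bu\in\BS}\bu^{\T}\bx_0\ge\theta$,
$$1\ge\|\Z^*(\bv,\bullet,\bullet)\|_\sigma\ge\Z^*(\bv,\by_0,\bz_0)=\langle\bv,\Z^*(\bullet,\by_0,\bz_0)\rangle=(\bv^{\T}\bx_0)\,\|\Z^*\|_\sigma\ge\theta\,\|\Z^*\|_\sigma,$$
so $\|\Z^*\|_\sigma\le\frac1\theta$ and $\theta\Z^*$ is indeed feasible. Then $\|\TT\|_*\ge\langle\TT,\theta\Z^*\rangle=\theta\,\mu(\BS)$, that is $\mu(\BS)\le\frac{\|\TT\|_*}{\theta}$.

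I do not expect a genuine obstacle here, since the whole argument is the dual-side mirror of Lemma~\ref{thm:appsnorm}. The only point needing a line of care is the attainment of the maximum defining $\mu(\BS)$ (equivalently, compactness of the relaxed feasible set), which rests on $\theta>0$ forcing $\BS$ to span $\R^\ell$; if one prefers to avoid it, one can replace the optimal $\Z^*$ above by a maximising sequence and pass to the limit in the displayed chain of inequalities.
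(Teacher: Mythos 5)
Your proof is correct and follows essentially the same route as the paper: the left inequality comes from viewing the program as a relaxation of~\eqref{eq:sdp1}, and the right inequality from showing $\theta\Z^*$ is feasible for $\max_{\|\Z\|_\sigma\le1}\langle\TT,\Z\rangle$. The only cosmetic difference is that the paper gets $\theta\|\Z_0\|_\sigma\le\max_{\bx\in\BS}\|\Z_0(\bx,\bullet,\bullet)\|_\sigma\le1$ by citing Lemma~\ref{thm:appsnorm} directly, whereas you re-derive that lemma's Cauchy--Schwarz argument inline (and add a justified remark on attainment of the maximum that the paper leaves implicit).
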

\begin{proof}
Denote $\Z_0$ to be an optimal solution of the considered semidefinite program. Obviously this is a relaxation of~\eqref{eq:sdp1} and so the lower bound holds, i.e., $\langle \TT,\Z_0 \rangle\ge\|\TT\|_*$.

For the upper bound, by the feasibility of $\Z_0$, one has $\left[\begin{array}{cc}
I &\Z_0(\bx,\bullet,\bullet) \\
(\Z_0(\bx,\bullet,\bullet))^{\T} & I
\end{array} \right] \succeq O$ for any $\bx\in\BS$. This means $\|\Z_0(\bx,\bullet,\bullet)\|_\sigma\le1$ for any $\bx\in\BS$, implying that $\max_{\bx\in \BS} \|\Z_0(\bx,\bullet,\bullet)\|_\sigma\le1$.

Applying Lemma~\ref{thm:appsnorm} to the tensor $\Z_0$, we have
$$
\|\theta\Z_0\|_\sigma= \theta \|\Z_0\|_\sigma \le \max_{\bx\in \BS} \|\Z_0(\bx,\bullet,\bullet)\|_\sigma \le1.
$$
Therefore, $\theta\Z_0$ is a feasible solution to the nuclear norm problem $\|\TT\|_*=\max_{\|\Z\|_\sigma\le 1}\langle\TT,\Z\rangle$. We have $\langle \TT, \theta\Z_0\rangle\le\|\TT\|_*$, i.e., $\langle\TT,\Z_0\rangle\le\frac{\|\TT\|_*}{\theta}$.
\end{proof}

We may again apply the polytope approximation to the hemisphere, $\BH(\ell,q)$ of~\eqref{eq:hemisphere} to Lemma~\ref{thm:appnnorm} since $\left[\begin{array}{cc}
I &\Z(\bx,\bullet,\bullet) \\
(\Z(\bx,\bullet,\bullet))^{\T} & I
\end{array} \right] \succeq O$ if and only if
$\left[\begin{array}{cc}
I &\Z(-\bx,\bullet,\bullet) \\
(\Z(-\bx,\bullet,\bullet))^{\T} & I
\end{array} \right] \succeq O$. The distance between $\langle \TT, \theta\Z_0\rangle$ and $\|\TT\|_*$ is then at most $(1-\theta)\|\TT\|_*\le \frac{\pi^2(\ell-1)}{8q^2}\|\TT\|_*$. A computable upper bound of $\|\TT\|_*$ is required to ensure a guaranteed error. $\mat(\TT)$ in~\eqref{eq:upper} is no longer useful since $\|\mat(\TT)\|_*\le\|\TT\|_*$~\cite{H15}. The best known easy computable upper bound is $\sum_{i=1}^\ell \|T_i\|_*$; see~\cite[Theorem 4.6]{CL20} for details.

\begin{algorithm} \label{alg:nnorm} An FPTAS to compute the nuclear norm of $\TT\in\R^{\ell\times m\times n}$ with an error $\epsilon>0$
\begin{enumerate}
  \item[1] Compute $q=\left\lceil\left(\frac{\pi^2(\ell-1)\sum_{i=1}^\ell \|T_i\|_*}{8\epsilon}\right)^{\frac{1}{2}}\right\rceil$ where $\TT=(T_1|T_2|\dots|T_\ell)$.
  \item[2] Compute
  \begin{equation}\label{eq:sdp2}
    \max \left\{\langle\TT,\Z\rangle: \left[\begin{array}{cc}
I &\Z(\bx,\bullet,\bullet) \\
(\Z(\bx,\bullet,\bullet))^{\T} & I
\end{array} \right] \succeq O \mbox{ for all } \bx\in\BH(\ell,q)\right\}
  \end{equation}
  and output $\left(1-\frac{\pi^2(\ell-1)}{8q^2}\right)$ times the optimal value.
\end{enumerate}
\end{algorithm}

We remark that the output of Algorithm~\ref{alg:nnorm} provides a lower bound of $\|\TT\|_*$ with an error being no more than $\epsilon$. For an upper bound, the optimal value of the semidefinite program~\eqref{eq:sdp2} already serves the purpose. Same to Algorithm~\ref{alg:snorm}, if one prefers a relative error $\epsilon\|\TT\|_*$, then $\sum_{i=1}^\ell \|T_i\|_*$, as an upper bound of $\|\TT\|_*$, can be removed in the formula of $q$. Moreover, $\left(1-\frac{\pi^2(\ell-1)}{8q^2}\right)\Z_0$ with $\Z_0$ being an optimal solution of~\eqref{eq:sdp2} serves a dual certificate for the problem $\max_{\|\Z\|_\sigma\le 1}\langle\TT,\Z\rangle$, as stipulated in the proof of Lemma~\ref{thm:appnnorm}. If one further requests a nuclear decomposition in the definition of nuclear norm~\eqref{eq:nnorm}, this can be obtained by solving the dual semidefinite program to~\eqref{eq:sdp2}.

The complexity of Algorithm~\ref{alg:nnorm} heavily depends on the semidefinite program~\eqref{eq:sdp2}, which has roughly $\left(\frac{\pi^2(\ell-1)\sum_{i=1}^\ell \|T_i\|_*}{8\epsilon}\right)^{\frac{\ell-1}{2}}$ positive semidefinite constraints with an $(m+n)\times(m+n)$ variable matrix. Although it is an FPTAS, its complexity is even higher than the complexity of Algorithm~\ref{alg:snorm} for the spectral norm. Using a numerically better polytope approximation of the hemisphere, we are able to compute the nuclear norm for small $\ell$'s. To the best of our knowledge, Algorithm~\ref{alg:snorm} is the first numerical method to compute the tensor nuclear norm problem~\eqref{eq:nnorm}.

To echo the extension to higher-order tensors for the polynomial-time complexity in Section~\ref{sec:polynomial}, Theorem~\ref{thm:norms} in particular, we remark that Algorithm~\ref{alg:snorm} and Algorithm~\ref{alg:nnorm} can be straightforwardly generalized to order $d$ tensors in a way that $d-2$ hemispheres with fixed dimensions need to be approximated. We skip these details and leave them to interested readers.

\section{Numerical experiments}\label{sec:numerical}

In this section, we test the performance of the proposed algorithms in Section~\ref{sec:fptas} to compute the spectral norm and nuclear norm of randomly generated tensors. All the numerical experiments are conducted under a linux server (Ubuntu 20.04) with an Intel Xeon Platinum 8358 @ 2.60GHz and 512GB of ram. The algorithms are implemented in Python 3 and its embedded SVD is called to compute the matrix spectral norm in Algorithm~\ref{alg:snorm}. The semidefinite optimization solver\footnote{https://docs.mosek.com/latest/pythonfusion/tutorial-sdo-shared.html} in MOSEK Fusion API for Python 9.3.13 is called to solve the semidefinite program in Algorithm~\ref{alg:nnorm}.

The number of spherical points in $\BH(\ell,q)$ for both algorithms, $\frac{(q-1)^\ell-1}{q-2}$ for $q\ge3$, is essential to the computational time for a tensor $\TT\in\R^{\ell\times m\times n}$. Although the $q$ is derived to guarantee an absolute error $\epsilon$ to $\|\TT\|_\sigma$ or $\|\TT\|_*$, it is fairer and easier to use $\epsilon\|\TT\|_\sigma$ or $\epsilon\|\TT\|_*$, i.e., a relative error $\epsilon$, in order to compare with different tensor instances. Therefore, we set $q=\left\lceil\left(\frac{\pi^2(\ell-1)}{8\epsilon}\right)^{\frac{1}{2}}\right\rceil$ for both algorithms. This also makes the computational results more accurate without involving a possible loose upper bound, $\|\mat(\TT)\|_\sigma$ for $\|\TT\|_\sigma$ or $\sum_{i=1}^\ell \|T_i\|_*$ for $\|\TT\|_*$, appeared in the original $q$.

\subsection{Computational time vs guaranteed error}\label{sec:cpu}

The main purpose of this set of experiments is to understand the dimensions of tensors that our algorithms can handle for various levels of guaranteed accuracy. We report the computational time of Algorithm~\ref{alg:snorm} and Algorithm~\ref{alg:nnorm} to computer the spectral norm and nuclear norm, respectively, for different level of relative errors $\epsilon$ in Table~\ref{table:cpu}. The dimensions of the tested tensors are $\ell\times n\times n$ where $\ell=2,3,\dots,7$ and $n=10,20,50$. The entries of these tensor instances are generated by i.i.d.~standard normal distributions.

\begin{table}[H]
{\footnotesize
\setlength{\tabcolsep}{3pt}
    \begin{minipage}{.5\linewidth}
      \centering
        \begin{tabular}{|c|ccccccc|}
          \hline
          $\ell \backslash \epsilon$ & 1.0e-1 & 1.0e-2  & 1.0e-3  & 1.0e-4  & 1.0e-5 & 1.0e-6 & 1.0e-7 \\ \hline
            2 & 0.00 & 0.00 & 0.01 & 0.03 & 0.08 & 0.26 & 0.85 \\
            3 & 0.01 & 0.08 & 0.75 & 6.74 & 62.2 &  & \\
            4 & 0.12 & 2.33 & 63.3 &  &  &  &  \\
            5 & 1.30 & 82.3 &  &  &  &  &  \\
            6 & 8.45 &  &  &  &  &  &  \\
            7 & 135 &  &  &  &  &  &  \\
          \hline
        \end{tabular}
\vskip-3mm\caption*{\footnotesize Spectral norm for $\ell\times 10\times 10$ tensors}
    \end{minipage}
    \begin{minipage}{.5\linewidth}
      \centering
        \begin{tabular}{|c|ccccccc|}
          \hline
          $\ell \backslash \epsilon$ & 1.0e-1 & 1.0e-2  & 1.0e-3  & 1.0e-4  & 1.0e-5 & 1.0e-6 & 1.0e-7 \\ \hline
            2 & 0.29 & 0.55 & 1.24 & 3.43 & 8.85 & 27.3 & 86.8 \\
            3 & 1.22 & 11.1 & 119 & 1.5e3 & 1.4e4 &  &\\
            4 & 16.2 & 544 & 1.9e4 &  &  &  &\\
            5 & 215 & 2.8e4 &  &  &  &  &\\
            6 & 2.3e4 &  &  &  &  &  &\\
            7 & 6.5e4 &  &  &  &  &  &\\
          \hline
        \end{tabular}
\vskip-3mm\caption*{\footnotesize Nuclear norm for $\ell\times 10\times 10$ tensors}
    \end{minipage}
\vskip2mm
    \begin{minipage}{.5\linewidth}
      \centering
        \begin{tabular}{|c|ccccccc|}
          \hline
          $\ell \backslash \epsilon$ & 1.0e-1 & 1.0e-2  & 1.0e-3  & 1.0e-4  & 1.0e-5 & 1.0e-6 & 1.0e-7 \\ \hline
            2 & 0.01 & 0.01 & 0.03 & 0.1 & 0.32 & 1.04 & 2.94 \\
            3 & 0.03 & 0.3 & 2.91 & 24.6 & 247 &  & \\
            4 & 0.39 & 8.86 & 274 &  &  &  & \\
            5 & 4.32 & 346 &  &  &  &  & \\
            6 & 32.6 &  &  &  &  &  & \\
            7 & 633 &  &  &  &  &  & \\
          \hline
        \end{tabular}
\vskip-3mm\caption*{\footnotesize Spectral norm for $\ell\times 20\times 20$ tensors}
    \end{minipage}
    \begin{minipage}{.5\linewidth}
      \centering
        \begin{tabular}{|c|ccccccc|}
          \hline
          $\ell \backslash \epsilon$ & 1.0e-1 & 1.0e-2  & 1.0e-3  & 1.0e-4  & 1.0e-5 & 1.0e-6 & 1.0e-7 \\ \hline
            2 & 1.17 & 2.55 & 6.64 & 17.0 & 53.9 & 159 & 514 \\
            3 & 6.47 & 79.9 & 664 & 1.2e4 & 9.3e4 &  & \\
            4 & 124 & 4.6e3 & 1.9e5 &  &  &  & \\
            5 & 2.5e3 & 3.5e5 &  &  &  &  & \\
            6 & 3.2e4 &  &  &  &  &  & \\
            7 &  &  &  &  &  &  & \\
          \hline
        \end{tabular}
\vskip-3mm\caption*{\footnotesize Nuclear norm for $\ell\times 20\times 20$ tensors}
    \end{minipage}
\vskip2mm
    \begin{minipage}{.5\linewidth}
      \centering
        \begin{tabular}{|c|ccccccc|}
          \hline
          $\ell \backslash \epsilon$ & 1.0e-1 & 1.0e-2  & 1.0e-3  & 1.0e-4  & 1.0e-5 & 1.0e-6 & 1.0e-7 \\ \hline
            2 & 0.07 & 0.09 & 0.87 & 0.9 & 2.37 & 5.67 & 17.1 \\
            3 & 0.16 & 2.37 & 15.4 & 266 & 1.4e3 &  & \\
            4 & 2.80 & 98.8 & 3.1e3  &  &  &  &  \\
            5 & 47.0 & 2.2e3 &  &  &  &  &  \\
            6 & 395 &  &  &  &  &  &  \\
            7 & 7.0e3 &  &  &  &  &  &  \\
          \hline
        \end{tabular}
\vskip-3mm\caption*{\footnotesize Spectral norm for $\ell\times 50\times 50$ tensors}
    \end{minipage}
    \begin{minipage}{.5\linewidth}
      \centering
        \begin{tabular}{|c|ccccccc|}
          \hline
          $\ell \backslash \epsilon$ & 1.0e-1 & 1.0e-2  & 1.0e-3  & 1.0e-4  & 1.0e-5 & 1.0e-6 & 1.0e-7 \\ \hline
            2 & 16.4 & 40.3 & 163 & 388 & 1.4e3 & 4.2e3 & 1.4e4 \\
            3 & 148 & 2.7e3 & 3.1e4 &  &  &  &  \\
            4 & 4.2e3 & 2.5e5 &  &  &  &  &  \\
            5 & 1.1e5 &  &  &  &  &  &  \\
            6 &  &  &  &  &  &  & \\
            7 &  &  &  &  &  &  & \\
          \hline
        \end{tabular}
\vskip-3mm\caption*{\footnotesize Nuclear norm for $\ell\times 50\times 50$ tensors}
    \end{minipage}
    \caption{CPU seconds for $\ell \times n \times n$ random tensors}\label{table:cpu}
}
\end{table}

An empty cell in Table~\ref{table:cpu} indicates either the number of spherical points in $\BH(\ell,q)$ is more than a million resulting out of memory or the computational time is more than ten hours. As expected, we observe that the computational time increases significantly with respect to the $\ell$ and the theoretical error. However, the algorithms can indeed handle relative large $n$. To get a better idea about the computational time with respect to the $n$, we present plots of CPU seconds below. We set a theoretically guaranteed relative error $\epsilon=10^{-3}$ and compute the spectral norm and nuclear norm of $3\times n\times n$ random tensors with varying $n$ whose computational times are plotted in Figure~\ref{f1}. It clearly shows that the computational time increases steadily but slowly when $n$ increases, especially for the tensor spectral norm.


\begin{figure}[H]
  \centering
  \includegraphics[width=.5\linewidth]{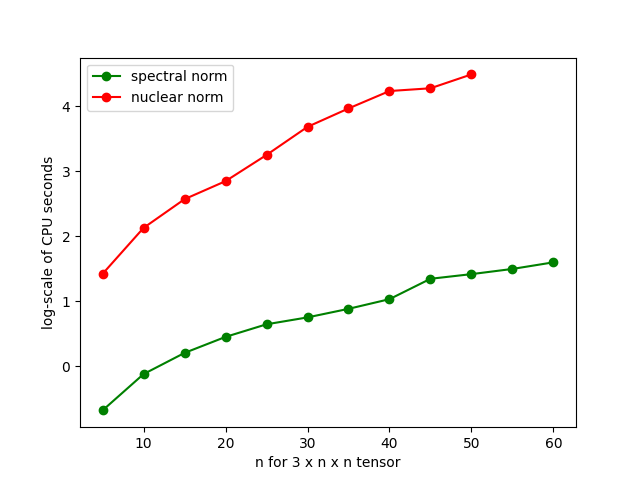} \\
  \caption{CPU seconds for $3 \times n \times n$ random tensors}\label{f1}
\end{figure}

To conclude from these experiments, Algorithm~\ref{alg:snorm} is able to compute the spectral norm of a dense tensor in size as large as $6\times 50\times 50$ and Algorithm~\ref{alg:nnorm} is able to compute the nuclear norm of a dense tensor in size as large as $3\times 50\times 50$. 


\subsection{Exact error via known tensor norms}

The relative errors to the true tensor norms in previous experiments are theoretically guaranteed. They may be worse than the exact relative errors obtained in reality. To investigate the exact errors, we now test some data tensors whose spectral norm and nuclear norm can be easily obtained. Let
\begin{equation} \label{eq:orthdecomp}
\TT=\sum_{i=1}^r \lambda_i\, \bx_i\otimes \by_i\otimes \bz_i \mbox{ with } \lambda_i>0 \mbox{ and } \|\bx_i\|=\|\by_i\|=\|\bz_i\|=1 \mbox{ for }i=1,2,\dots,r,
\end{equation}
where $(\bx_i^{\T}\bx_j)(\by_i^{\T}\by_j)=\bz_i^{\T}\bz_j=0$ for $i\neq j$. This is a special class of orthogonal tensor decompositions~\cite{K06} where two of the three factors ($\bx,\by,\bz$) are orthogonal to each other between any two rank-one components. By the definition of the two norms,~\eqref{eq:snorm} and~\eqref{eq:nnorm}, it is not difficulty to see that $\|\TT\|_\sigma\ge\max\lambda_i$ and $\|\TT\|_*\le\sum\lambda_i$. On the other hand, flattening $\TT$ along the third mode gives $\mat(\TT)= \sum_{i=1}^r \lambda_i \vct(\bx_i\otimes \by_i) \bz_i^{\T}$ where $\vct(\bx_i\otimes \by_i)$ rewrites an $\ell\times m$ matrix to a vector of dimension $\ell m$ row by row. Since
$$
\langle \bx_i\otimes \by_i, \bx_j\otimes \by_j\rangle = (\bx_i^{\T}\bx_j)(\by_i^{\T}\by_j) =0
\mbox{ and }
\bz_i^{\T}\bz_j=0
\mbox{ for }
i\neq j,
$$
$\sum_{i=1}^r \lambda_i \vct(\bx_i\otimes \by_i) \bz_i^{\T}$ is actually a singular value decomposition of the matrix $\mat(\TT)$. Therefore, $\|\mat(\TT)\|_\sigma=\max\lambda_i$ and $\|\mat(\TT)\|_*=\sum\lambda_i$. Together with the fact that $\|\TT\|_\sigma\le \|\mat(\TT)\|_\sigma$ and $\|\TT\|_*\ge \|\mat(\TT)\|_*$ for general tensors, we conclude that $\|\TT\|_\sigma=\max\lambda_i$ and $\|\TT\|_*=\sum\lambda_i$ for a $\TT$ in~\eqref{eq:orthdecomp}.

We test $4\times 10\times 10$ tensors in the form of~\eqref{eq:orthdecomp} whose components are generated by i.i.d.~standard normal distributions and made positive or orthogonal if necessary. For different levels of target errors guaranteed by Algorithm~\ref{alg:snorm} and Algorithm~\ref{alg:nnorm} under different $q$'s, we run there algorithms and then compute exact relative errors of the outputs since their spectral norm and nuclear norm are known. The results are shown in Table~\ref{table:error}.


\begin{table}[H]
{\footnotesize
\setlength{\tabcolsep}{5pt}
      \centering
        \begin{tabular}{|l|cccccccccc|}
          \hline
$q$ & 6 & 8 & 10 & 13 & 17 & 22 & 28 & 36 & 47 & 61 \\
Number of spherical points & 156 & 400 & 820 & 1885 & 4369 & 9724 & 20440 & 44136 & 99499 & 219661 \\
Theoretical target error & 1.0e-1 & 5.8e-2 & 3.7e-2 & 2.2e-2 & 1.3e-2 & 7.7e-3 & 4.7e-3 & 2.9e-3 & 1.7e-3 & 1.0e-3 \\
          \hline
Exact error of spectral norm  & 1.6e-2 & 2.9e-2 & 9.0e-3 & 1.0e-2 & 3.2e-3 & 3.7e-3 & 1.4e-3 & 1.5e-3 & 9.1e-4 & 2.8e-4 \\
Exact error of nuclear norm &3.3e-2 & 3.8e-2 & 1.9e-2 & 1.5e-2 & 5.3e-3 & 4.2e-3 & 2.6e-3 & 1.9e-3 & 9.8e-4 & 4.7e-4\\
          \hline
        \end{tabular}
    \caption{Exact errors for $4\times 10\times 10$ tensors in~\eqref{eq:orthdecomp}}\label{table:error}
}
\end{table}

Although the observed errors may vary from case to case in Table~\ref{table:error}, it is fair to say that the exact error is around a third of the target error for the spectral norm and a half of the target error for the nuclear norm. We believe that random generated tensors may have caused the error larger. Using an example in~\cite[Example 6.3]{N17} where $\TT\in\R^{3\times 3\times 3}$ with $t_{ijk}=i+j+k$ for $1\le i,j,k\le3$ and $\|\TT\|_*=33.6749$, we obtain a much smaller error than the theoretical target one; see Table~\ref{table:error2}.

\begin{table}[H]
{\footnotesize
      \centering
        \begin{tabular}{|l|ccccc|}
          \hline
Theoretical target error & 1.0e-1 & 5.0e-1 & 1.0e-2 & 1.0e-3 & 1.0e-4 \\
\hline
Approximate nuclear norm & 35.3409 & 33.9619 & 33.6912 & 33.6753 & 33.6749 \\
Exact error of nuclear norm & 4.9e-2 & 8.5e-3 & 4.8e-4 & 1.2e-5 & nil \\
CPU seconds & 0.17 & 0.28 & 1.06 & 8.75 & 81.37 \\
          \hline
        \end{tabular}
    \caption{Exact errors for $\TT\in\R^{3\times 3\times 3}$ with $t_{ijk}=i+j+k$}\label{table:error2}
}
\end{table}

It is the time to remark on our methods and existing global optimal methods in the literature. To the best of our knowledge, the only method to compute the tensor nuclear norm is based on the sums-of-squares relaxation by Nie~\cite{N17} where the above example was studied. Sums-of-squares based method deals with polynomial functions and so symmetric tensors (entries being invariant under permutations of indices) is essential to the method. For a symmetric $\ell \times \ell \times \ell$ tensor, the method deals with a polynomial in $\ell$ variables. In this scenario such as the example in Table~\ref{table:error2}, Nie's approach easily beats Algorithm~\ref{alg:nnorm} as they obtain $\|\TT\|_*=33.6749$ in 0.9 seconds. This is understandable since Nie's approach explores the problem structure while our method is more brute-force. However, for a general nonsymmetric $\ell \times m \times n$ tensor, Nie's approach transfers the problem to a symmetric $(\ell+m+n)\times(\ell+m+n)\times(\ell+m+n)$ tensor to work with a polynomial in $\ell+m+n$ variables, making it impossible when $\ell+m+n\ge20$. In fact from Section~\ref{sec:cpu}, our methods work well for small $\ell$ but can handle $\ell+m+n$ to a hundred, a scale that no existing global methods can deal with. The situation for the tensor spectral norm is pretty much the same apart from the general behavior that computing the tensor spectral norm is usually faster than the nuclear norm. In the next part, we show a simple idea to improve our methods.

\subsection{Balanced approximation of hemisphere}\label{sec:random}

The complexity of our algorithms heavily relies on the number of spherical points to approximate the hemispheres $\BH(\ell,q)$. To obtain a guaranteed relative error $\epsilon$ for an $\ell\times m\times n$ tensor, this number is 
$\frac{(q-1)^\ell-1}{q-2}$ with $q=\left\lceil\left(\frac{\pi^2(\ell-1)}{8\epsilon}\right)^{\frac{1}{2}}\right\rceil$. For instance, if $\ell=4$ and $\epsilon=10^{-3}$, it is already $2.2\times 10^5$ under which computing the nuclear norm by Algorithm~\ref{alg:nnorm} costs hours. Although the spherical grid is easy to provide theoretically guaranteed errors, it is not evenly distributed as gaps are large around the equator and small around the pole. To expect smaller errors via a more balanced distribution of the spherical points, in this set of experiments, we try spherical sample points that are i.i.d.~uniformly distributed on the unit sphere to replace $\BH(\ell,q)$ in both Algorithm~\ref{alg:snorm} and Algorithm~\ref{alg:nnorm}.

In order to compare exact relative errors under varying number of random spherical points, we use the tensor instances in~\eqref{eq:orthdecomp} whose spectral norm and nuclear norm are known. Figure~\ref{f23} shows the plots of theoretical target errors, exact errors by the spherical grid in Table~\ref{table:error} and exact errors by uniform spherical samples, under the same testing structure in Table~\ref{table:error} for $4\times 10\times 10$ tensors. The horizontal axis shows the number of spherical points.



\begin{figure}[H]
\centering
\begin{minipage}{.5\textwidth}
  \centering
  \includegraphics[width=1.0\linewidth]{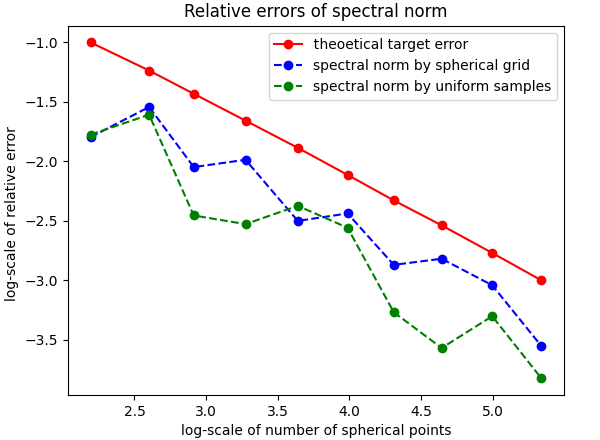}
\end{minipage}%
\begin{minipage}{.5\textwidth}
  \centering
  \includegraphics[width=1.0\linewidth]{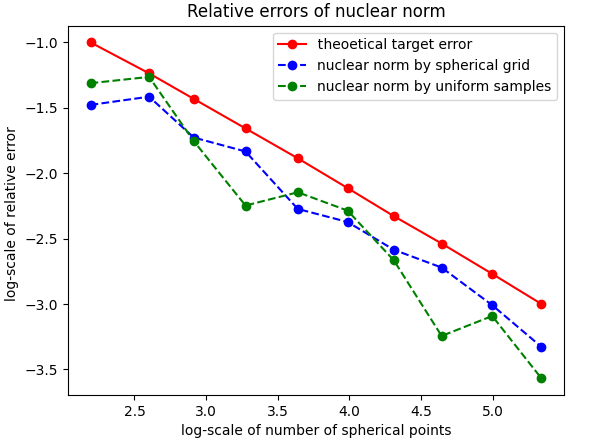}
\end{minipage}
\caption{Error comparisons for $4\times 10\times 10$ tensors in~\eqref{eq:orthdecomp}} \label{f23}
\end{figure}

Comparing the errors obtained by the spherical grid with that by random spherical points, they are break-even if the number of spherical points are not large because of the randomness. However, when the number of points are sufficiently large, in particular if the relative error is less than $10^{-2.5}$, uniform spherical points clearly beats the spherical grid under the same number of spherical points.

\subsection{Higher-order tensors}

As mentioned at the end of Section~\ref{sec:fptas}, our algorithms can be easily extended to higher-order tensors if the dimensions of all modes except the largest two are small. In this part, we report exact relative errors and the computational time to compute the spectral norm and nuclear norm of $2\times 3\times 8\times 10$ tensors by two spherical grids $\BH(2,q)\times \BH(3,q)$. To check exact relative errors, we again use tensors with known spectral norm and nuclear norm, similar to~\eqref{eq:orthdecomp}. Specifically,
$$
\TT=\sum_{i=1}^r \lambda_i\, \bx_i\otimes \by_i\otimes \bz_i\otimes \bw_i \mbox{ with } \lambda_i>0 \mbox{ and } \|\bx_i\|=\|\by_i\|=\|\bz_i\|=\|\bw_i\|=1 \mbox{ for }i=1,2,\dots,r,
$$
where $(\bx_i^{\T}\bx_j)(\bz_i^{\T}\bz_j)=(\by_i^{\T}\by_j)(\bw_i^{\T}\bw_j)=0$ for $i\neq j$. For these tensors, one also has $\|\TT\|_\sigma=\max\lambda_i$ and $\|\TT\|_*=\sum\lambda_i$. The results are shown in Table~\ref{table:high}.



\begin{table}[H]
{\footnotesize
      \centering
        \begin{tabular}{|l|cccccc|}
          \hline
Number of spherical points & $7\times 91$ & $11\times 241$ & $16\times 463$ & $22\times 993$ & $35\times 2353$  & $50\times 4831$ \\
Theoretical target error & 5.0e-2 & 2.0e-2 & 1.0e-2 & 5.0e-3 & 2.0e-3 & 1.0e-3 \\
 \hline
Exact error of spectral norm& 1.5e-2 & 1.2e-2 & 1.9e-3 & 1.5e-3 & 4.8e-4 & 4.0e-5 \\
CPU seconds & 0.39 & 1.70 & 4.18 & 11.0 & 40.9 & 111\\
          \hline
Exact error of nuclear norm & 2.6e-2 & 1.4e-2 & 6.5e-3 & 3.0e-3 & 1.2e-3 & 5.9e-4 \\
CPU seconds & 41.4 & 206 & 481 & 1.5e3 & 5.7e3 & 1.7e4\\
          \hline
        \end{tabular}
    \caption{Exact errors and CPU seconds for $2\times 3\times 8\times 10$ tensors}\label{table:high}
}
\end{table}

To conclude the experiments section, we remark that there is no global optimal method that can compute the tensor spectral norm or nuclear norm in our scales in the literature, to the best of our knowledge, and so no comparisons can be made. There are indeed many efficient algorithms to compute the tensor spectral norm as mentioned in the introduction but all of them are only able to converge to local optimal solutions. Nevertheless, it is always helpful to make local improvements by applying these algorithms and starting with our near-optimal solutions. We do not push these further but would like to remark that our methods are promising and open to be improved. A simple uniform sampling on the sphere has already showed visible improvements in Section~\ref{sec:random}.

\section{Concluding remarks}\label{sec:conclusion}

We summarize our understanding for the complexity of computing the tensor spectral norm. For a general order $d$ tensor $\TT\in\R^{n_1\times n_2\times \dots\times n_d}$ without loss of generality that $n_1\le n_2\le \dots\le n_d$, if either the order $d$ or the third largest dimension $n_{d-2}$ is taken as a problem input dimension, then the problem is NP-hard~\cite{HLZ10}. However, if both $d$ and $n_{d-2}$ are deemed as fixed values, then the problem can be solved in polynomial time (Theorem~\ref{thm:norms}) by applying a bisection search with the help of feasibility test of $O\left(\prod_{k=1}^{d-2}n_k\right)$ number homogeneous quadratic equations (Lemma~\ref{thm:qp2}). Remark that $d$ has to be fixed in order for a polynomial-time complexity as otherwise visiting $\prod_{k=1}^dn_k$ different entries is already exponential. In short, the tensor spectral norm can be computed in polynomial time if and only if $\prod_{k=1}^{d-2}n_k$ is deemed as a fixed value. On another front, restricting to symmetric tensors of order $d$ and dimensions $n\times n\times \dots \times n$ where $n$ is fixed but $d$ is taken as a problem input, the number of different entries of the symmetric tensor is in fact $O(d^n)$, a polynomial of $d$. Friedland and Wang~\cite{FW20} showed that the spectral norm of this type of symmetric tensors can be computed in polynomial time. The complexity of computing the tensor nuclear norm is exactly the same to the spectral norm, thanks to the duality result by Friedland and Lim~\cite{FL16}.

In terms of practical numerical computations, the state-of-the-art methods for the tensor spectral norm are the various iterative methods developed under the framework of the best rank-one approximation of tensors mentioned in the introduction. These methods ran much faster than the global optimal methods but they usually converge to local optimal solutions. Sum-of-squares based methods are the only known global optimal methods that can solve the tensor spectral norm when the number of variables, $\ell+m+n$ for an $\ell\times m\times n$ tensor, is not large. Our FPTAS for the tensor spectral norm is able to handle large $m$ and $n$ but small $\ell$. However, the methods for the tensor nuclear norm is almost blank in a sharp contract to its modelling in the large amount of research work on tensor completion and recovering. The only known method by Nie~\cite{N17} is based on sums-of-squares relaxation and works only for symmetric tensors of limited size. Our FPTAS for the tensor nuclear norm is able to handle large $m$ and $n$ but smaller $\ell$. In this sense, it is an important complement to the computational aspect of the tensor nuclear norm.

Finally and more importantly, our FPTASs, encouraged by the polynomial-time complexity and shown capability in numerical experiments, open a new and promising door to the computation of these tensor norms, especially the nuclear norm. We believe that much can be improved on the brutal-force spherical samples as well as combining with the efficient existing local optimal methods. These will leave to the future work.

\section*{Acknowledgments}

The research is partially supported by the National Natural Science Foundation of China (Grants 72171141, 72150001 and 11831002) and Program for Innovative Research Team of Shanghai University of Finance and Economics.

\end{document}